\title{Classical consequences of continuous choice principles from intuitionistic analysis}
\author{Fran{\c c}ois G. Dorais}
\date{October 30, 2011\\{\small(Revised July 26, 2012)}}
\newcounter{proc}[section]
\theoremstyle{plain}
\newtheorem{theorem}[proc]{Theorem}
\newtheorem{corollary}[proc]{Corollary}  
\newtheorem{lemma}[proc]{Lemma}
\newtheorem{proposition}[proc]{Proposition}
\theoremstyle{definition}
\newtheorem{definition}[proc]{Definition}
\newtheorem{remark}[proc]{Remark}
\newenvironment{axiom}[1]{\displaymath\tag{#1}}{\enddisplaymath}
\mathchardef\mhyphen="2D
\newcommand{\C}{\mathbb{C}}
\newcommand{\N}{\mathbb{N}}
\newcommand{\Q}{\mathbb{Q}}
\newcommand{\R}{\mathbb{R}}
\newcommand{\cat}{\mathop{{\hat{\,}}}}
\newcommand{\seq}[1]{\langle#1\rangle}
\newcommand{\set}[1]{\lbrace#1\rbrace}
\newcommand{\res}[2]{\overline{#1}#2}
\DeclareMathOperator{\dom}{dom}
\newcommand{\rec}{\mathsf{R}}
\newcommand{\pr}[2]{\seq{#1,#2}}
\newcommand{\fst}[1]{#1\pi_0}
\newcommand{\snd}[1]{#1\pi_1}
\newcommand{\shift}[1]{#1\sigma}
\newcommand{\converges}{{\downarrow}}
\newcommand{\diverges}{{\uparrow}}
\newcommand{\IFF}{\Leftrightarrow}
\newcommand{\lthen}{\mathrel{\rightarrow}}
\newcommand{\liff}{\mathrel{\leftrightarrow}}
\newcommand{\proves}{\vdash}
\newcommand{\vset}[1]{[#1]}
\newcommand{\eha}{\ensuremath{\mathsf{E}\mhyphen\mathsf{HA}}}
\newcommand{\el}{\ensuremath{\mathsf{EL}}}
\newcommand{\rca}{\ensuremath{\mathsf{RCA}}}
\newcommand{\rcao}{\ensuremath{\mathsf{RCA}_0}}
\newcommand{\wkl}{\ensuremath{\mathsf{WKL}}}
\newcommand{\lem}{\ensuremath{\mathsf{LEM}}}
\newcommand{\markov}{\ensuremath{\mathsf{M}}}
\newcommand{\qfac}{\ensuremath{\mathsf{QF}\mhyphen\mathsf{AC}^{0,0}}}
\newcommand{\ac}{\ensuremath{\mathsf{AC}}}
\newcommand{\gc}{\ensuremath{\mathsf{GC}}}
\newcommand{\lgc}{\ensuremath{\mathsf{GC}_L}}
\newcommand{\lpo}{\ensuremath{\mathsf{LPO}}}
\newcommand{\llpo}{\ensuremath{\mathsf{LLPO}}}
\newcommand{\conax}{\ensuremath{\mathsf{CON}}}
\newcommand{\recax}{\ensuremath{\mathsf{REC}}}
\newcommand{\sucax}{\ensuremath{\mathsf{SA}}}
\newcommand{\ia}{\ensuremath{\mathsf{IA}}}
\newcommand{\qfia}{\ensuremath{\mathsf{QF}\mhyphen\mathsf{IA}}}
\newcommand{\krf}{\mathrel{\mathtt{rf}}}
\newcommand{\kneg}{\mathrm{N}_K}
\newcommand{\kcon}{\Gamma_K}
\newcommand{\kmax}{\mathsf{CN}}
\newcommand{\lrf}{\mathrel{\mathtt{lrf}}}
\newcommand{\lneg}{\mathrm{N}_L}
\newcommand{\lcon}{\Gamma_L}
\newcommand{\lmax}{\mathsf{CN}_L}
\begin{document}

\maketitle

\begin{abstract}
  \noindent
  The \emph{sequential form} of a statement \[\tag{$\dagger$}\label{E:Basic}\forall\xi(B(\xi) \lthen \exists\zeta A(\xi,\zeta))\] is the statement \[\forall\xi(\forall n B(\xi_n) \lthen \exists\zeta \forall n A(\xi_n,\zeta_n)).\]
  There are many classically true statements of the form \eqref{E:Basic} whose proofs lack uniformity and therefore the corresponding sequential form is not provable in weak classical systems.
  The main culprit for this lack of uniformity is of course the law of excluded middle.
  Continuing along the lines of Hirst and Mummert~\cite{HirstMummert11}, we show that if a statement of the form \eqref{E:Basic} satisfying certain syntactic requirements is provable in some weak intuitionistic system, then the proof is necessarily sufficiently uniform that the corresponding sequential form is provable in a corresponding weak classical system.
  Our results depend on Kleene's realizability with functions and the Lifschitz variant thereof.
\end{abstract}

\section*{Introduction}

In~\cite{Brouwer27}, Brouwer introduced the continuity theorem, which states that every function on the unit interval is (uniformly) continuous.
While many other principles of intuitionistic analysis are classically valid (e.g., the fan theorem and the bar theorem), Brouwer's continuity theorem contradicts the law of excluded middle.
Indeed, were the equality of two real numbers decidable, then the characteristic function of the singleton \(\set0\) would be an example of a discontinuous function defined on the unit interval.

Still, many formal systems of constructive analysis either satisfy Brouwer's continuity theorem, or are compatible with it.
In fact, variants of the continuity theorem are often combined with the (classically valid) choice principles to yield continuous choice principles of the form:
\begin{quote}
  \textit{If for every \(\xi\) there is a \(\zeta\) such that \(A(\xi,\zeta),\) then there is a continuous function \(F\) such that \(A(\xi,F(\xi))\) holds for all \(\xi.\)}
\end{quote}
When \(\xi\) and \(\zeta\) are interpreted as varying over the the unit interval (or the real numbers, or Cantor space, or Baire space), this enforces a highly constructive strength to the existential quantifier.
Indeed, the continuity of \(F\) allows to effectively translate finitary information about the argument \(\xi\) into finitary information about a witness \(\zeta\) to the statement \(A(\xi,\zeta).\)
Thus, even in very weak systems where infinitary constructions are hardly formalizable, one can still use \(F\) to simultaneously transform an infinite sequence of arguments \(\seq{\xi_0,\xi_1,\dots}\) into a corresponding sequence of witnesses \(\seq{\zeta_0,\zeta_1,\dots}\) such that \(A(\xi_n,\zeta_n)\) holds for every \(n.\)

This general idea was exploited by Hirst and Mummert~\cite{HirstMummert11} to show that if \(A(\xi,\zeta)\) has a special syntactic form, then \[\eha^\omega + \ac \proves \forall\xi\exists\zeta A(\xi,\zeta)\] implies \[\rca^\omega \proves \forall\xi\exists\zeta\forall n A(\xi_n,\zeta_n),\] where \(\eha^\omega\) is a system Heyting arithmetic with extensional higher types that is used in proof theory (cf.\ \cite{Kohlenbach08}), \ac\ is the full axiom of choice, and \(\rca^\omega\) is a variant with higher types of Friedman's classical system of recursive comprehension that is used in reverse mathematics (cf.\ \cite{Kohlenbach05}).

The results of Hirst and Mummert are based on Kreisel's modified realizability and G{\"o}del's dialectica interpretation.
In this paper, we use Kleene's realizability with functions and a Lifshitz variant thereof due to van Oosten to obtain similar results.
Our first result (Corollary~\ref{C:KUniformization}) shows in particular that if \(A(\xi,\zeta)\) satisfies certain syntactic requirements, then \[\el + \gc \proves \forall\xi\exists\zeta A(\xi,\zeta)\] then \[\rca \proves \forall\xi\exists\zeta\forall nA(\xi_n,\zeta_n),\] where \el\ is a system of intuitionistic analysis described in the next section and \gc\ is a strong continuous choice principle that implies Brouwer's continuity theorem.
Our second result (Corollary~\ref{C:LUniformization}) is similar except that it incorporates the weak K{\"o}nig lemma (\wkl).%
\footnote{Note that in the reverse mathematics literature, \wkl\ is normally used as an abbreviation for \rca\ together with the weak K{\"o}nig lemma.
We will avoid this practice since we also want to consider the weak K{\"o}nig lemma in intuitionistic systems.}
If \(A(\xi,\zeta)\) satisfies certain syntactic requirements, then \[\el + \wkl + \lgc \proves \forall\xi\exists\zeta A(\xi,\zeta)\] implies \[\rca + \wkl \proves \forall\xi\exists\zeta\forall n A(\xi_n,\zeta_n),\] where \lgc\ is a weakening of \gc\ that does not imply continuous choice but still implies Brouwer's continuity theorem.
This result is very interesting since \wkl\ is not generally recognized as a constructive principle.

\section{The systems \el\ and \rca}

Our base system for intuitionistic analysis is a minor variant of the system \el\ described by Troelstra~\cite[\S1.9.10]{Troelstra73}.
This is a system with two sorts: numbers and (unary) functions.
We will generally use Roman letters \(a,b,c,\ldots\) to range over number terms and Greek letters \(\alpha,\beta,\gamma,\ldots\) to range over function terms.
The terms of the language are built as follows:
\begin{itemize}
\item number variables are number terms;
\item function variables are function terms;
\item the zero constant \(0\) is a number term;
\item the successor constant \(\shift{}\) is a function term;
\item if \(t_1,\dots,t_k\) are number terms and \(f\) is a symbol for a \(k\)-ary primitive recursive function then \(f(t_1,\dots,t_k)\) is a number term;
\item if \(t\) is a number term and \(\tau\) is a function term then the evaluation \(\tau(t)\) is a number term;
\item if \(t\) is a number term and \(x\) is a number variable then \(\lambda x.t\) is a function term;
\item if \(t\) is a number term and \(\tau\) is a function term then \(\rec t \tau\) is a function term.
\end{itemize}
The only atomic relation in our language is equality for the number sort; equality for the function sort is defined by extensionality: \[\alpha = \beta \liff \forall x(\alpha(x) = \beta(x)).\]
Formulas are built in the usual way for intuitionistic systems, except that we think of the disjunction \(A \lor B\) as an abbreviation for \[\exists x ((x = 0 \lthen A) \land (x \neq 0 \lthen B)).\]
Since equality for the number sort is decidable, this is equivalent to the usual intuitionistic disjunction~\cite[\S1.3.7]{Troelstra73}.

In addition to the usual intuitionistic logic axioms, our base systems have the usual equality axioms and the defining axioms for all primitive recursive functions.
Of course, for this to make sense, the zero and successor constants must satisfy
\begin{axiom}{\sucax} 
  \sigma(x) \neq 0 \land (\sigma(x) = \sigma(y) \lthen x = y)
\end{axiom}
and the induction scheme
\begin{axiom}{\ia}
  \forall x(A(x) \lthen A(\sigma(x))) \lthen \forall x(A(0) \lthen A(x)),
\end{axiom}
where \(A(x)\) is any formula.
The last two term formation rules are governed by the \(\lambda\)-conversion scheme
\begin{axiom}{\conax} 
  (\lambda x.t)(t') = t[x/t']
\end{axiom}
and the recursion scheme
\begin{axiom}{\recax} 
  (\rec t \tau)(0) = t \land (\rec t \tau)(\shift{}(t')) = \tau((\rec t \tau)(t')).
\end{axiom}
Moreover, we have the following choice scheme
\begin{axiom}{\qfac} 
  \forall x \exists y A(x,y) \lthen \exists \alpha \forall x A(x,\alpha(x))
\end{axiom}
where \(A(x,y)\) is a quantifier-free formula.
The system \(\el_0\) is defined in exactly the same way, except that \(\ia\) is replaced by the quantifier-free induction axiom \(\qfia.\)

Although not part of our base systems, we will often make use of the \emph{Markov principle}
\begin{axiom}{\markov}
  \lnot\lnot\exists x(\alpha(x) = 0) \lthen \exists x (\alpha(x) = 0).
\end{axiom}
This principle is a simple consequence of the \emph{law of excluded middle} (\lem), which distinguishes classical systems from intuitionistic systems.
We define \(\rca\) and \(\rca_0\) to be the classical systems \(\el + \lem\) and \(\el_0 + \lem,\) respectively.
These are function-based systems which are equivalent to the set-based system of recursive comprehension (with full induction and just \(\Sigma^0_1\)-induction, respectively) traditionally used in reverse mathematics~\cite{Simpson09}.

Since our basic systems have symbols for all primitive recursive functions, pairs and sequences of numbers can be encoded in the usual manner.
The length of a finite sequence \(x\) is denoted \(|x|.\)
We write \(\seq{x_0,\ldots,x_{n-1}}\) for the finite sequence of length \(n\) whose \((i+1)\)-th term is \(x_i.\)
The concatenation of \(x\) and \(y\) is denoted \(x\cat y.\)
We will often view functions as infinite sequences of numbers.
If \(\alpha\) is a function, we write \(\res{\alpha}n\) for the finite initial segment \(\seq{\alpha(0),\ldots,\alpha(n-1)}.\)

For pairs and sequences of functions, we use the following encoding schemes.
Define \[\fst{} = \lambda n.2n, \quad \snd{} = \lambda n.2n+1.\]
If \(\alpha, \beta\) are two functions then \(\pr{\alpha}{\beta}\) denotes the unique function such that \(\fst{\pr{\alpha}{\beta}} = \alpha\) and \(\snd{\pr{\alpha}{\beta}} = \beta.\)
In a similar fashion, any function \(\alpha\) can also be viewed as an infinite sequence of functions where the \((m+1)\)-th such function is \[\alpha_m = \lambda n.\alpha(2^m(2n+1)-1).\]
When it makes sense, we will write \(\seq{\alpha_m}_{m=0}^\infty\) for the unique \(\alpha\) whose \((m+1)\)-th component is \(\alpha_m.\)
Number-function pairs are encoded by concatenation, that is \(\seq{n}\cat\alpha\) denotes the unique function such that \((\seq{n}\cat\alpha)(0) = n\) and \(\shift{(\seq{n}\cat\alpha)} = \alpha.\)

\subsection{Kleene's second algebra in \el}

Our results of Section~\ref{S:Kleene} depend on Kleene's realizability with functions.
The base system \el\ is tailored to formalize this notion of realizability.
To do this, we need to discuss the representation of partial continuous maps inside \el.

A function \(\alpha\) encodes a partial continuous map from functions to numbers defined by \[\alpha(\beta) = \alpha(\res{\beta}{n}) - 1\] where \(n\) is the unique number such that \[\alpha(\res{\beta}{n}) \neq 0 \land \forall m < n(\alpha(\res{\beta}{m}) = 0);\] if there is no such \(n,\) then \(\alpha(\beta)\) is undefined. 
We write \(\alpha(\beta)\converges\) when \(\alpha(\beta)\) is defined and we write \(\alpha(\beta)\diverges\) when \(\alpha(\beta)\) is undefined.

Similarly, \(\alpha\) encodes a partial continuous map from functions to functions defined by \[\alpha|\beta = \lambda n.\alpha(\seq{n}\cat\beta)\] provided that \(\alpha(\seq{n}\cat\beta)\converges\) for every \(n.\)
We write \(\alpha|\beta\converges\) when \(\alpha|\beta\) is defined and we write \(\alpha|\beta\diverges\) when \(\alpha|\beta\) is undefined.
We use the left associative convention for \(|,\) that is we will write \(\alpha|\beta|\gamma\) for \((\alpha|\beta)|\gamma.\)
Consequently, \(\alpha|\beta|\gamma\converges\) abbreviates \(\alpha|\beta\converges \land (\alpha|\beta)|\gamma\converges,\) and so on.

Every partial continuous map \(F\) from functions to functions whose domain is a \(G_\delta\) set admits a representation of the form \(F(\xi) = \phi|\xi.\)
We will write \(\Lambda\xi.F(\xi)\) for a function \(\phi\) that represents \(F\) in this way.
There are always multiple choices for \(\phi,\) but in all instances of this fact that we will use there is a natural choice of \(\phi\) that can be read from the description of \(F.\)

\subsection{Compact sets of functions in \el}

Our results of Section~\ref{S:Lifschitz} depend on the Lifschitz variant of realizability with functions due to van Oosten.
To formalize this notion of realizability, we need to introduce an encoding of compact sets of functions.

Every function \(\alpha\) encodes a compact set of functions defined by \[\vset{\alpha} = \set{\xi \leq \fst{\alpha} : \snd{\alpha}(\xi)\diverges}.\]
Formally, we think of \(\xi \in \vset{\alpha}\) as an abbreviation for the statement \[\forall n(\xi(n) \leq \fst{\alpha}(n) \land \snd{\alpha}(\res{\xi}{n}) = 0).\]
We will write \(\vset{\alpha} \neq \varnothing\) to assert that \(\vset{\alpha}\) is inhabited: \(\exists\xi(\xi \in \vset{\alpha}).\)

For a sound theory of compact sets, we will make frequent use of the \emph{weak K{\"o}nig lemma}:
\begin{axiom}{\wkl}
  T(\alpha,\beta) \land \forall n \exists x (|x| = n \land \alpha(x) = 0) \lthen \exists \xi \leq \beta \forall n (\alpha(\res{\xi}{n}) = 0),
\end{axiom}
where \(T(\alpha,\beta)\) says that \(\set{x : \alpha(x) = 0}\) is a tree bounded by \(\beta\): \[\forall x,y (\alpha(x\cat\seq{y}) = 0 \lthen \alpha(x) = 0 \land y \leq \beta(|x|)).\]
With this axiom, the statement \(\vset{\alpha} \neq \varnothing\) is equivalent to a \(\Pi^0_1\) formula.

Van Oosten~\cite{VanOosten90} shows that many properties of compact sets can be formalized in the theory \el\ + \wkl\ + \markov.
In particular, the following fact \cite[Lemma~5.7]{VanOosten90} will be useful.

\begin{lemma}\label{L:Image}
  There is a function term \(\iota\) such that \(\el + \wkl + \markov\) proves that \[\forall\xi \in \vset{\alpha}(\phi|\xi\converges) \lthen \iota|\pr{\phi}{\alpha}\converges \land \forall\zeta(\zeta \in \vset{\iota|\pr{\phi}{\alpha}} \liff \exists\xi \in \vset{\alpha}(\zeta = \phi|\xi)).\]
  In other words, if \(\vset{\alpha} \subseteq \dom\phi\) then \(\vset{\iota|\pr{\phi}{\alpha}} = \set{\phi|\xi : \xi \in \vset{\alpha}}.\)
\end{lemma}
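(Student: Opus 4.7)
The plan is to define a function term $\iota$ so that, given $\phi$ and $\alpha$ with $\vset{\alpha} \subseteq \dom\phi$, the pair $\iota|\pr{\phi}{\alpha}$ explicitly encodes the continuous image $\set{\phi|\xi : \xi \in \vset{\alpha}}$. This amounts to specifying, uniformly from $\phi$ and $\alpha$, a pointwise bound function and a tree-defining function for that image.

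First I would extract from the hypothesis $\forall \xi \in \vset{\alpha}(\phi|\xi\converges)$ a modulus of uniform convergence: a function $\mu$, computable from $\phi$ and $\alpha$, such that for every $n$ and every finite sequence $u$ of length $\mu(n)$ with $u(k) \leq \fst{\alpha}(k)$ and $\snd{\alpha}(\res{u}{k}) = 0$ for all $k < \mu(n)$, each of the output digits $(\phi|\xi)(i)$ for $i < n$ is already determined by $u$. The standard argument is by contradiction: if some $n$ admitted no such bound, the collection of bad admissible sequences would form an infinite finitely-branching subtree of the tree defining $\vset{\alpha}$; \wkl\ would then furnish a branch $\xi \in \vset{\alpha}$ on which some digit of $\phi|\xi$ fails to converge, contradicting the hypothesis. \markov\ is needed to turn the resulting double-negated existence of a correct $\mu(n)$ into an explicit value.

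Second, from $\mu$ I would build $\fst{(\iota|\pr{\phi}{\alpha})}$ and $\snd{(\iota|\pr{\phi}{\alpha})}$ by bounded search. The bound at $n$ is the maximum, over all admissible $u$ of length $\mu(n+1)$, of the $n$-th digit that $\phi$ outputs from $u$. The tree function returns $0$ at $v$ exactly when some admissible $u$ of length $\mu(|v|)$ produces output extending $v$, and is nonzero otherwise. Because these operations are uniform in $\phi$ and $\alpha$, they are realized by a single fixed function term $\iota$.

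The forward direction of the equivalence is then direct: any $\xi \in \vset{\alpha}$ has $\phi|\xi$ bounded by the chosen bound, and every initial segment of $\phi|\xi$ is witnessed by the relevant $\res{\xi}{\mu(|v|)}$, so $\phi|\xi \in \vset{\iota|\pr{\phi}{\alpha}}$. The main obstacle will be the reverse direction: given $\zeta \in \vset{\iota|\pr{\phi}{\alpha}}$, I must produce $\xi \in \vset{\alpha}$ with $\phi|\xi = \zeta$. The natural construction is to form the subtree of admissible sequences whose $\phi$-image at the appropriate level extends the matching initial segment of $\zeta$; this tree is infinite by the defining property of $\vset{\iota|\pr{\phi}{\alpha}}$ and is bounded by $\fst{\alpha}$, so \wkl\ produces a branch $\xi$, and \markov\ verifies coordinate by coordinate that $\phi|\xi = \zeta$. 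The real mathematical content lies in the uniform-convergence step, which is precisely where all three of \el, \wkl, and \markov\ are essential; the rest is careful bookkeeping to collect the constructions into a single function term $\iota$.
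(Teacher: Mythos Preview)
The paper does not actually prove this lemma: it is stated with a citation to van Oosten~\cite[Lemma~5.7]{VanOosten90} and no argument is given in the text. So there is no in-paper proof to compare against.

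That said, your outline is the standard argument and is essentially what van Oosten does. The three ingredients you identify --- a uniform modulus of convergence for $\phi$ on $\vset{\alpha}$ obtained by a compactness (\wkl) argument together with \markov\ to extract the witness, the explicit description of the bound and tree for the image from that modulus, and a second application of \wkl\ to produce a preimage for any $\zeta$ in the coded image --- are exactly the right ones. One point to handle carefully in a full write-up is that ``admissible'' finite sequences in the tree for $\vset{\alpha}$ need not extend to branches, so when you build the tree in the reverse direction you should define it as the set of admissible $u$ whose already-determined $\phi$-output digits agree with $\zeta$, and check that this tree has nodes at every level (not just at levels of the form $\mu(n)$); this follows by truncating longer witnesses. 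With that bookkeeping in place your plan is sound.
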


\noindent
It is unclear whether the Markov principle \markov\ is necessary to establish this and other lemmas from~\cite{VanOosten90}.


\section{Classical consequences of \gc}\label{S:Kleene}

Troelstra's \emph{generalized continuity principle} is the scheme
\begin{equation*}\tag{\gc}
  \forall \xi (B(\xi) \lthen \exists \zeta A(\xi,\zeta)) \lthen 
  \exists \alpha \forall \xi (B(\xi) \lthen \alpha|\xi\converges \land A(\xi,\alpha|\xi))
\end{equation*} 
where \(B(\xi)\) is in \(\kneg\) (defined below) and \(A(\xi,\zeta)\) is arbitrary.
One immediate consequence of \gc\ is that if \(A(\xi,\zeta)\) defines the graph of a total function, then this function must be continuous.
It follows that \gc\ is plainly false in the classical system \rca.

However, we will momentarily define two classes of formulas \(\kneg\) and \(\kcon\) such that consequences of \gc\ of the form \[\forall\xi(B(\xi) \lthen \exists\zeta A(\xi,\zeta))\] where \(B(\xi)\) is in \(\kneg\) and \(A(\xi,\zeta)\) is in \(\kcon\) are not only consequences of \rca, but the sequential form \[\forall\xi(\forall n B(\xi_n) \lthen \exists\zeta \forall n A(\xi_n,\zeta_n))\] is also a consequence of \rca.

The proof of this fact relies on Kleene's realizability with functions~\cite{KleeneVesley65}, which is defined as follows.

\begin{definition}\label{D:KRF}\mbox{}
  \begin{itemize}
  \item \(\alpha \krf A\) is \(A\) for atomic \(A.\)
  \item \(\alpha \krf (A \land B)\) is \(\fst{\alpha} \krf A \land \snd{\alpha} \krf B.\)
  \item \(\alpha \krf (A \lthen B)\) is \(\forall \xi(\beta \krf A \lthen \alpha|\xi\converges \land \alpha|\xi \krf B).\)
  \item \(\alpha \krf \forall x A\) is \(\forall x(\alpha_x \krf A).\)
  \item \(\alpha \krf \forall \xi A\) is \(\forall \xi(\alpha|\xi\converges \land \alpha|\xi \krf A).\)
  \item \(\alpha \krf \exists x A\) is \(\shift{\alpha} \krf A[x/\alpha(0)].\)
  \item \(\alpha \krf \exists \xi A\) is \(\snd{\alpha} \krf A[\xi/\fst{\alpha}].\)
  \end{itemize}
\end{definition}

\noindent
Note that \(\alpha \krf A\) never involves existential quantifiers, except to say that \(\alpha|\xi\converges\) in which case the scope of the existential quantifier is quantifier-free.
It follows that \(\alpha \krf A\) always belongs to the class \(\kneg.\)%
\footnote{Elements of \(\kneg\) are called `almost negative formulas' by Troelstra~\cite{Troelstra73}.}

\begin{definition}\label{D:KNeg}\mbox{}
  \begin{itemize}
  \item If \(A\) is quantifier-free then \(A,\) \(\exists x A,\) \(\exists \xi A\) are in \(\kneg.\)
  \item If \(A, B\) are in \(\kneg\) then so are \(A \land B,\) \(A \lthen B,\) \(\forall x A,\) \(\forall \xi A.\)
  \end{itemize}
\end{definition}

In fact, the formulas of \(\kneg\) are precisely the formulas which realize themselves in the following sense~\cite[Lemma~3.3.8]{Troelstra73}.

\begin{lemma}\label{L:KNegative}
  If \(B(\xi) \in \kneg\) then \[\el \proves \exists\alpha(\alpha \krf B(\xi)) \liff B(\xi).\]
  In fact, there is a function term \(\omega_B\) such that \[\el \proves B(\xi) \liff \omega_B|\xi\converges \land \omega_B|\xi \krf B(\xi).\]
\end{lemma}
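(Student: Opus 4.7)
The plan is to prove both directions of the lemma simultaneously by structural induction on $B(\xi)$ according to Definition~\ref{D:KNeg}, with the stronger conclusion---existence of the function term $\omega_B$---as the main invariant and the soundness statement $\alpha\krf B(\xi)\Rightarrow B(\xi)$ carried along as a secondary invariant. The biconditional $\exists\alpha(\alpha\krf B(\xi))\liff B(\xi)$ then follows: the $\Leftarrow$ direction by taking $\alpha=\omega_B|\xi$, and the $\Rightarrow$ direction from soundness.

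In the base cases I unfold Definition~\ref{D:KRF}. For quantifier-free $B(\xi)$, $\alpha\krf B(\xi)$ is $B(\xi)$ itself, so any total function term like $\omega_B=\Lambda\xi.\lambda n.0$ works and soundness is trivial. For $B(\xi)=\exists x\,A(\xi,x)$ with $A$ quantifier-free, the realization clause reduces to $A(\xi,\alpha(0))$, making soundness immediate; for existence I define $\omega_B$ as an explicit function term performing a primitive-recursive least-witness search, which is effective because $A$ is decidable from a bounded initial segment of $\xi$. The case $B(\xi)=\exists\eta\,A(\xi,\eta)$ is analogous: once enough of $\xi$ is inspected, $A$ imposes only finitely many constraints on $\eta$, which can be satisfied continuously by setting values at the constrained positions and padding with zeros.

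For the inductive steps I package the sub-realizers via the $\Lambda$ notation from the discussion of Kleene's second algebra: $\omega_{C\land D}=\Lambda\xi.\pr{\omega_C|\xi}{\omega_D|\xi}$, and for $\forall x\,C$ and $\forall\eta\,C$ I assemble the appropriate family of $\omega_C|\xi$'s into a single function via primitive recursion so as to match the decoding conventions of Definition~\ref{D:KRF}. The crucial case is $B=C\lthen D$: the soundness invariant for $C$ gives $\beta\krf C(\xi)\Rightarrow C(\xi)$, so under the assumption $B(\xi)$ any such $\beta$ yields $D(\xi)$ and hence (by the existence invariant for $D$) $\omega_D|\xi\converges\land\omega_D|\xi\krf D(\xi)$; I therefore arrange $\omega_B|\xi|\beta=\omega_D|\xi$. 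The main obstacle I foresee is ensuring that $\omega_B|\xi$ itself converges as a function whenever $B(\xi)$ holds, including the vacuous case in which $C(\xi)$ and $D(\xi)$ both fail so that $\omega_D|\xi$ may diverge; this is handled by a careful encoding that makes $\omega_B|\xi$ a total element of $\N^\N$ via padding of the partial-continuous-map representation, with the divergence of the resulting partial continuous functional on irrelevant $\beta$'s being harmless by the soundness invariant for $C$.
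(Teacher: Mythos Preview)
The paper does not supply its own proof of this lemma; it simply cites Troelstra~\cite[Lemma~3.3.8]{Troelstra73}. Your structural induction on the build-up of \(B\) according to Definition~\ref{D:KNeg}, carrying simultaneously the soundness direction \(\alpha\krf B(\xi)\lthen B(\xi)\) and the existence of a canonical realizer, is exactly the standard argument found there, and your treatment of the base and inductive cases is correct.

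One organizational remark on the implication case. You abstract over the parameter \(\xi\) at every inductive step, writing \(\omega_C|\xi,\omega_D|\xi\) throughout, and this is what generates the convergence worry you describe: if \(C(\xi)\lthen D(\xi)\) holds vacuously then \(\omega_D|\xi\) may diverge, so you must engineer \(\omega_{C\lthen D}|\xi\) to be total nonetheless. Your padding trick does work (it is essentially the s-m-n construction in Kleene's second algebra), but the usual presentation in Troelstra sidesteps the issue by constructing each \(\omega_B\) as a function \emph{term} in which the parameters of \(B\) remain free, proving \(B\lthen\omega_B\krf B\) directly, and performing the single abstraction \(\Lambda\xi.\omega_B\) only at the very end. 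Since every function term denotes a total function, the implication clause then reads simply \(\omega_{C\lthen D}=\Lambda\beta.\omega_D\), which is automatically total, and the final \(\Lambda\xi\) converges because \(\omega_B\) is a total term in \(\xi\). Either route is fine; the second just makes the obstacle you identified disappear.
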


\noindent
Stated in full generality, \(B\) could depend on more than one argument (hence so would \(\omega_B\)).
However, this more general statement can be derived from Lemma~\ref{L:KNegative} by packing all the arguments into one.

Kleene's realizability with functions was given the following characterization by Troelstra~\cite[Theorem~3.3.11]{Troelstra73}.

\begin{theorem}[Characterization of \(\krf\)]\label{T:KCharacterization}
  For every formula \(A\): 
  \begin{enumerate}[\upshape\bfseries(a)]
  \item \(\el + \gc \proves A \liff \exists\alpha(\alpha \krf A)\)
  \item \(\el + \gc \proves A \IFF \el \proves \exists\alpha(\alpha \krf A)\)
  \end{enumerate}
\end{theorem}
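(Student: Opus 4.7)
The plan is to prove part \textbf{(a)} first by induction on the logical structure of the formula $A$, and then deduce part \textbf{(b)} from part \textbf{(a)} together with a soundness theorem establishing that every theorem of $\el + \gc$ admits an $\el$-definable realizer. For the forward direction of \textbf{(a)}, the atomic, conjunction, and existential clauses are routine from Definition~\ref{D:KRF}: pairing via $\pr{\cdot}{\cdot}$ and the projections $\fst{}, \snd{}$ mechanically handle conjunction and existence; the defining clauses then give the equivalence $A \liff \exists\alpha(\alpha \krf A)$ immediately. The converse direction uses the same clauses read backwards together with Lemma~\ref{L:KNegative}, since $\alpha \krf A$ always lies in $\kneg$.

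The three interesting cases are implication, $\forall\xi$, and $\forall x$, and in each \gc\ is the decisive tool. For $A \lthen B$: by the induction hypothesis $A \lthen B$ rewrites as $\exists\beta(\beta \krf A) \lthen \exists\gamma(\gamma \krf B)$, which is intuitionistically equivalent to $\forall\beta(\beta \krf A \lthen \exists\gamma(\gamma \krf B))$. Since $\beta \krf A$ lies in $\kneg$ (by the remark following Definition~\ref{D:KRF}), this is in precisely the shape to which \gc\ applies, producing $\alpha$ with $\alpha|\beta\converges \land \alpha|\beta \krf B$ whenever $\beta \krf A$, which is exactly $\alpha \krf (A \lthen B)$. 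For $\forall\xi A(\xi)$, one applies \gc\ with a trivially true premise such as $0 = 0 \in \kneg$ to turn $\forall\xi \exists\alpha(\alpha \krf A(\xi))$ into $\exists\phi \forall\xi(\phi|\xi\converges \land \phi|\xi \krf A(\xi))$. For $\forall x A(x)$, the same pattern applies after viewing the number $x$ through a canonical function coding, or equivalently by invoking the number-indexed instance of \gc.

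For \textbf{(b)}, the right-to-left direction is immediate: if $\el \proves \exists\alpha(\alpha \krf A)$ then \emph{a fortiori} $\el + \gc \proves \exists\alpha(\alpha \krf A)$, and part \textbf{(a)} yields $\el + \gc \proves A$. The left-to-right direction proceeds by induction on a derivation of $A$ in $\el + \gc$: for each logical axiom, equality axiom, defining axiom of a primitive recursive function, instance of \conax, \recax, \sucax, \ia, \qfac, or \gc, and for each rule of inference, one writes down an explicit closed function term realizing the axiom or transforming realizers according to the rule. The realizer for \ia\ is built from \rec, the realizer for \qfac\ uses $\lambda$-abstraction over the witnesses obtained from $\forall x\exists y$, and the realizer for \gc\ itself is essentially the identity once one unfolds the shape of its realizability interpretation and exploits Lemma~\ref{L:KNegative} on the almost-negative premise.

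The main obstacle is the soundness step in \textbf{(b)}, specifically exhibiting a self-realizer for \gc. One must check that when $B(\xi)$ is in $\kneg$, a realizer of the premise $\forall\xi(B(\xi) \lthen \exists\zeta A(\xi,\zeta))$ can be uniformly converted into a realizer of the conclusion $\exists\alpha\forall\xi(B(\xi) \lthen \alpha|\xi\converges \land A(\xi,\alpha|\xi))$, and this conversion must itself be $\el$-definable. Here Lemma~\ref{L:KNegative} does the heavy lifting: the function term $\omega_B$ supplies, uniformly in $\xi$, a canonical realizer of $B(\xi)$, which can then be fed into the assumed realizer of the premise to extract the required $\alpha$. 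The remaining cases (induction, choice, logic) are routine but tedious, and are the standard content of soundness proofs for realizability with functions.
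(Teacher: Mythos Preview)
The paper does not supply its own proof of this theorem: it is quoted as Troelstra's result with a citation to \cite[Theorem~3.3.11]{Troelstra73}, so there is no in-paper argument to compare against. Your sketch is the standard one that Troelstra gives and is essentially correct: part~\textbf{(a)} by simultaneous induction on the complexity of $A$, using \gc\ at the implication and universal clauses (the hypothesis of \gc\ being available precisely because $\beta \krf A$ lands in $\kneg$), and part~\textbf{(b)} by a soundness induction on derivations, with Lemma~\ref{L:KNegative} furnishing the canonical realizer $\omega_B$ needed to self-realize \gc. One small inaccuracy: in the backward direction of \textbf{(a)} you do not actually need Lemma~\ref{L:KNegative}; the induction hypothesis alone suffices (e.g.\ from $\alpha \krf (A \lthen B)$ and $A$, the forward IH gives a realizer of $A$, the definition of $\krf$ then gives a realizer of $B$, and the backward IH gives $B$). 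Lemma~\ref{L:KNegative} is genuinely needed only where you place it later, in the soundness step for \gc.
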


\noindent
If \(A\) has the property that \(\el \proves \exists\alpha(\alpha \krf A) \lthen A,\) we then have \[\el + \gc \proves A \IFF \el \proves A.\]
Thus \(\el + \gc\) is conservative over \(\el\) for formulas with this property.
Lemma~\ref{L:KNegative} shows that every formula in \(\kneg\) has this property, but so do many other formulas.
 
\begin{definition}\label{D:KCon}\mbox{}
 \begin{itemize}
  \item Quantifier-free formulas are in \(\kcon.\)
  \item If \(A, B\) are in \(\kcon\) then so are \(A \land B,\) \(\forall x A,\) \(\forall \xi A,\) \(\exists x A,\) \(\exists \xi A.\)
  \item If \(A\) is in \(\kneg\) and \(B\) is in \(\kcon,\) then \(A \lthen B\) is in \(\kcon.\)
  \end{itemize}
\end{definition}

\noindent
The following fact is implicit in~\cite[Theorem~3.6.18]{Troelstra73}.

\begin{lemma}\label{L:KConservation}
  If \(A \in \kcon\) then \(\el \proves \exists \alpha (\alpha \krf A) \lthen A.\)
\end{lemma}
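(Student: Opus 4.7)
The plan is to prove the lemma by induction on the construction of $A$ as an element of $\kcon$, following the three clauses of Definition~\ref{D:KCon}.

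For the base case, when $A$ is quantifier-free, $A$ belongs to $\kneg$ by the first clause of Definition~\ref{D:KNeg}, so Lemma~\ref{L:KNegative} immediately gives $\exists\alpha(\alpha \krf A) \liff A$, and in particular the desired implication. For the inductive step under the second clause, I would handle each formula-building operation in turn, in each case unpacking the definition of $\krf$ from Definition~\ref{D:KRF} and invoking the induction hypothesis on the subformulas. For instance, given $\alpha \krf (A \land B)$, the definition yields $\fst{\alpha} \krf A$ and $\snd{\alpha} \krf B$, so the induction hypotheses on $A$ and $B$ (both of which lie in $\kcon$) give $A$ and $B$, hence $A \land B$. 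For $\forall x A$, from $\forall x(\alpha_x \krf A)$ we get that each $\alpha_x$ realizes $A$, so the induction hypothesis applied inside the quantifier yields $\forall x A$; the cases $\forall \xi A$, $\exists x A$, $\exists \xi A$ are analogous, using in the function-quantifier case that $\alpha|\xi$ is defined and realizes $A$ for every $\xi$.

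The main obstacle is the third clause, where $A \in \kneg$, $B \in \kcon$, and we want to pass from a realizer of $A \lthen B$ to an actual proof of $A \lthen B$. Here the definition gives $\forall \xi(\xi \krf A \lthen \alpha|\xi\converges \land \alpha|\xi \krf B)$, and to exploit this we need to manufacture a realizer of $A$ from a bare assumption of $A$. This is exactly what Lemma~\ref{L:KNegative} provides: assuming $A$, the function term $\omega_A$ (applied to the free parameters of $A$) yields a witness $\xi_0$ such that $\xi_0 \krf A$. Substituting this into the hypothesis shows $\alpha|\xi_0\converges$ and $\alpha|\xi_0 \krf B$, and then the induction hypothesis on $B$ concludes $B$, so $A \lthen B$ follows.

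Since each step is carried out by term-level operations available in \el{} and uses only the induction hypothesis plus Lemma~\ref{L:KNegative}, the whole argument can be formalized in \el. I do not expect any need for \markov{}, \wkl, or \gc{} at any point: the restriction of antecedents of implications to $\kneg$ is precisely what makes the implication case work without any continuity or classical principles, and the closure of $\kcon$ under the positive connectives $\land, \forall, \exists$ is handled by pure pattern matching against Definition~\ref{D:KRF}.
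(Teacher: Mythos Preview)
Your proposal is correct and is essentially the argument the paper has in mind: the paper does not spell out a proof of Lemma~\ref{L:KConservation} (it defers to Troelstra), but the proof sketch it gives for the parallel Lemma~\ref{L:LConservation} is precisely your induction on the build-up of \(A\) in \(\kcon,\) with the implication case handled by invoking Lemma~\ref{L:KNegative} to produce a realizer of the \(\kneg\) antecedent and then applying the induction hypothesis to the consequent.
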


\noindent
Thus, by the characterization of \(\krf,\) it follows that \gc\ is conservative over \el\ for formulas in \(\kcon.\)

Together, the above results imply the following.

\begin{proposition}\label{P:KExtract}
  Suppose \(B(\xi) \in \kneg\) and \(A(\xi,\zeta) \in \kcon.\)
  If \[\el + \gc \proves \forall \xi(B(\xi) \lthen \exists \zeta A(\xi,\zeta))\] then \[\el \proves \exists\alpha\forall\xi(B(\xi) \lthen \alpha|\xi\converges \land A(\xi,\alpha|\xi)).\]
\end{proposition}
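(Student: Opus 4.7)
The plan is to combine Theorem~\ref{T:KCharacterization}(b) with Lemmas~\ref{L:KNegative} and~\ref{L:KConservation} to first obtain a realizer internally in \el, then peel off the realizability conditions one quantifier at a time to extract an honest continuous functional.

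First I would apply Theorem~\ref{T:KCharacterization}(b) to the hypothesis, yielding a closed function term $\alpha_0$ such that \[\el \proves \alpha_0 \krf \forall\xi(B(\xi) \lthen \exists\zeta A(\xi,\zeta)).\] Unfolding Definition~\ref{D:KRF}, this means that for every $\xi$, $\alpha_0|\xi\converges$ and for every $\beta$ realizing $B(\xi)$, one has $(\alpha_0|\xi)|\beta\converges$, and moreover $\snd{((\alpha_0|\xi)|\beta)}$ realizes $A(\xi,\fst{((\alpha_0|\xi)|\beta)})$.

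The key trick is to produce, from $\xi$ alone, a canonical realizer of $B(\xi)$ whenever $B(\xi)$ holds. This is precisely what Lemma~\ref{L:KNegative} provides: since $B(\xi) \in \kneg$, there is a term $\omega_B$ such that, provably in \el, $B(\xi)$ is equivalent to $\omega_B|\xi\converges \land \omega_B|\xi \krf B(\xi)$. I would then set \[\alpha \ldef \Lambda\xi.\fst{\bigl((\alpha_0|\xi)|(\omega_B|\xi)\bigr)},\] which is definable inside \el\ since all of $\alpha_0$, $\omega_B$, $\fst{}$, and partial application are representable. Assuming $B(\xi)$, Lemma~\ref{L:KNegative} gives convergence of $\omega_B|\xi$ together with the realizability condition, and the unfolded realizability of $\alpha_0$ then forces $(\alpha_0|\xi)|(\omega_B|\xi)\converges$; hence $\alpha|\xi\converges$ and $\alpha|\xi = \fst{((\alpha_0|\xi)|(\omega_B|\xi))}$ is our candidate witness $\zeta$.

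It remains to show $A(\xi,\alpha|\xi)$, which is where Lemma~\ref{L:KConservation} does the final work: by the realizability conditions just extracted, $\snd{((\alpha_0|\xi)|(\omega_B|\xi))}$ realizes $A(\xi,\alpha|\xi)$, so $\exists\gamma(\gamma \krf A(\xi,\alpha|\xi))$ holds, and since $A(\xi,\zeta) \in \kcon$, Lemma~\ref{L:KConservation} gives $A(\xi,\alpha|\xi)$ outright. Packaging this into a single universal statement under the assumption $B(\xi)$ yields the desired conclusion, and the whole argument is carried out inside \el.

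The only real obstacle is bookkeeping: one must check that the $\Lambda$-abstraction $\Lambda\xi.\fst{((\alpha_0|\xi)|(\omega_B|\xi))}$ is genuinely represented by a function term of \el\ (i.e.\ that the partial continuous operations compose as required), and that Lemma~\ref{L:KNegative} applies in the form we need even though $B$ may contain free variables beyond $\xi$ — but the remark after Lemma~\ref{L:KNegative} explicitly reduces this to the single-argument case by pairing.
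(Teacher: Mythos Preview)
Your proof is correct and follows essentially the same route as the paper's: apply the characterization theorem to obtain a realizer, unfold the clauses of \(\krf\), plug in the canonical realizer \(\omega_B|\xi\) from Lemma~\ref{L:KNegative}, project out the witness, and invoke Lemma~\ref{L:KConservation}. The only minor quibble is that Theorem~\ref{T:KCharacterization}(b) as stated yields \(\el \proves \exists\alpha(\alpha \krf \ldots)\) rather than a closed term \(\alpha_0\); the paper accordingly works inside \(\el\) with an assumed realizer \(\beta\), but this makes no difference to the argument (and the stronger term-extraction claim is in any case true by the underlying soundness proof).
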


\begin{proof}
  By Theorem~\ref{T:KCharacterization}, we know that \[\el \proves \exists\beta(\beta \krf \forall \xi (B(\xi) \lthen \exists \zeta A(\xi,\zeta))).\]
  Work in \el\ and assume \(\beta \krf \forall \xi(B(\xi) \lthen \exists\zeta A(\xi,\zeta)).\)
  Unpacking Definition~\ref{D:KRF}, we see that \[\gamma \krf B(\xi) \lthen \beta|\xi|\gamma\converges \land \snd{(\beta|\xi|\gamma)} \krf A(\xi,\fst{(\beta|\xi|\gamma)}).\]
  Since \(B(\xi) \in \kneg,\) it follows from Lemma~\ref{L:KNegative} that there is a term \(\omega_B\) such that \(B(\xi) \liff \omega_B|\xi\converges \land \omega_B|\xi \krf B(\xi).\)
  Finally, since \(A(\xi,\zeta) \in \kcon,\) it follows from Lemma~\ref{L:KConservation} that \(\alpha = \Lambda\xi.\fst{(\beta|\xi|(\omega_B|\xi))}\) is as required.
\end{proof}

\noindent
By the deduction theorem, the above result also holds when \el\ is replaced by \el\ + \(\Delta,\) where \(\Delta\) is any collection of sentences from \(\kneg.\)

\begin{definition}
  Let \(\kmax\) be the set of all sentences \(A\) from \(\kneg\) such that \(\rca \proves A.\)
  In other words, \(\kmax\) consists of all consequences of the law of excluded middle which belong to the syntactic class \(\kneg.\)
\end{definition}

\noindent
Note that \(\kmax\) includes the Markov principle \markov.

Our uniformization result for this section is the following.

\begin{corollary}\label{C:KUniformization}
  Suppose \(B(\xi)\) is from \(\kneg\) and \(A(\xi,\zeta)\) is from \(\kcon.\)
  If \[\el + \gc + \kmax \proves \forall\xi(B(\xi) \lthen \exists\zeta A(\xi,\zeta))\] then \[\rca \proves \forall\xi(\forall n B(\xi_n) \lthen \exists\zeta \forall n A(\xi_n,\zeta_n)).\]
\end{corollary}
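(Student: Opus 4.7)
The plan is to reduce the corollary to Proposition~\ref{P:KExtract} by treating the sentences of \(\kmax\) as a collection of admissible premises from \(\kneg\), then to lift the resulting existential into \(\rca\) and verify that the uniform witness \(\alpha\) can be applied in parallel to every \(\xi_n\).

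The first step is to note that \(\kmax \subseteq \kneg\) by definition, so the remark following Proposition~\ref{P:KExtract} applies with \(\Delta = \kmax\); the hypothesis of the corollary then produces
\[\el + \kmax \proves \exists\alpha\,\forall\xi\bigl(B(\xi) \lthen \alpha|\xi\converges \land A(\xi,\alpha|\xi)\bigr).\]
Since every sentence of \(\kmax\) is a theorem of \(\rca\) by definition and \(\rca\) extends \(\el\), the displayed existential is provable in \(\rca\). Working inside \(\rca\) I would then fix \(\xi\) with \(\forall n B(\xi_n)\), choose an \(\alpha\) supplied by this existential, and observe that for each \(n\) the premise \(B(\xi_n)\) delivers both \(\alpha|\xi_n\converges\) and \(A(\xi_n,\alpha|\xi_n)\). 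Defining \(\zeta\) by \(\zeta(2^n(2k+1)-1) = (\alpha|\xi_n)(k)\) gives \(\zeta_n = \alpha|\xi_n\), hence \(\forall n A(\xi_n,\zeta_n)\) as required.

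The one point that calls for care is the existence of \(\zeta\) in \(\rca\). I expect it to go through in either of two equivalent ways. On the one hand, the operation \((\alpha,\xi) \mapsto \zeta\) is itself representable by a single function term \(\psi\) of \(\el\), obtained by composing the application operator \(|\) with the primitive-recursive projections \(\xi \mapsto \xi_n\), so that \(\zeta = \psi|\pr{\alpha}{\xi}\) is defined precisely when all \(\alpha|\xi_n\converges\). Alternatively, under the convergence hypothesis the graph of \(\zeta\) is \(\Delta^0_1\) in \((\alpha,\xi)\), and so \(\zeta\) is delivered by \(\qfac\). Either route is routine; the substantive extraction has already been performed by Proposition~\ref{P:KExtract}, and all that the classical lift contributes is the ability to discharge the \(\kmax\)-hypotheses using \(\lem\).
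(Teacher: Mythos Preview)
Your proposal is correct and follows essentially the same route as the paper: invoke Proposition~\ref{P:KExtract} with \(\Delta = \kmax\), pass to \(\rca\), and set \(\zeta = \seq{\alpha|\xi_n}_{n=0}^\infty\). Your additional discussion of why \(\zeta\) exists in \(\rca\) (via a function term or \(\qfac)\) just makes explicit what the paper leaves implicit in the notation \(\seq{\alpha|\xi_n}_{n=0}^\infty\); the only cosmetic point is that \(\alpha\) should be fixed before \(\xi\), since it witnesses the outer existential \(\exists\alpha\forall\xi(\ldots)\).
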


\begin{proof}
  Suppose that \[\el + \gc + \kmax \proves \forall\xi(B(\xi) \lthen \exists\zeta A(\xi,\zeta)).\]
  By Lemma~\ref{P:KExtract}, we know that \[\el + \kmax \proves \exists\alpha\forall\xi(B(\xi) \lthen \alpha|\xi\converges \land A(\xi,\alpha|\xi)).\]
  Now work in \(\rca,\) which extends \(\el+\kmax.\)
  Given \(\alpha\) such that \[\forall\xi(B(\xi) \lthen \alpha|\xi\converges \land A(\xi,\alpha|\xi)),\] if \(\xi\) is such that \(\forall n B(\xi_n),\) then \(\zeta = \seq{\alpha|\xi_n}_{n=0}^\infty\) is such that \(\forall n A(\xi_n,\zeta_n).\)
  It follows that \[\rca \proves \forall\xi(\forall n B(\xi_n) \lthen \exists\zeta \forall n A(\xi_n,\zeta_n)).\qedhere\]
\end{proof}

\noindent
Note that this proof gives much more than the conclusion of the theorem requires.
Indeed, Proposition~\ref{P:KExtract} is a much stronger result than Corollary~\ref{C:KUniformization}.
Nevertheless, Corollary~\ref{C:KUniformization} has several uses and its proof constiutes a nice warm-up for the next section.

\begin{remark}\label{R:KInduction}
  In reverse mathematics, it is traditional to use the base system \(\rca_0,\) which only postulates \(\Sigma^0_1\)-induction, rather than the system \(\rca,\) which postulates full induction.
  Unfortuately, following the proof theoretic tradition, Troelstra assumes full induction throughout~\cite{Troelstra73}.
  However, a close inspection of Troelstra's arguments shows that this assumption is not necessary to establish the characterization and conservation results for \(\krf.\)
  Therefore, Proposition~\ref{P:KExtract} and Corollary~\ref{C:KUniformization} have analogues with \(\el\) and \(\rca\) replaced by \(\el_0\) and \(\rca_0,\) respectively.
\end{remark}

\section{Classical consequences of \lgc}\label{S:Lifschitz}

Van Oosten's \emph{Lifschitz generalized continuity principle} is the scheme
\begin{multline*}\tag{\lgc}
  \forall \xi (B(\xi) \lthen \exists \zeta A(\xi,\zeta)) \lthen \\
  \exists \alpha \forall \xi (B(\xi) \lthen \alpha|\xi\converges \land \vset{\alpha|\xi} \neq \varnothing \land \forall\zeta \in \vset{\alpha|\xi} A(\xi,\zeta))
\end{multline*}
where \(B(\xi)\) is in \(\lneg\) (defined below) and \(A(\xi,\zeta)\) is arbitrary.
Unlike \gc, which offers a single witness for \(\exists\zeta A(\xi,\zeta),\) \lgc\ offers a nonempty compact set of witnesses for \(\exists\zeta A(\xi,\zeta).\)
The parameter for this compact set varies continuously with \(\xi,\) but there is no general way to continuously select a single element from this compact set.
Thus, \gc\ implies \lgc\ but the converse is false.

Nevertheless, \lgc\ still implies Brouwer's continuity theorem.
Indeed, if \(A(\xi,\zeta)\) describes the graph of a total function, then the compact set of witnesses produced by \lgc\ must be a singleton set.
Since it is possible to continuously extract the unique element of a singleton set from its parameter \cite[Lemma~5.3]{VanOosten90}, this shows that \(A(\xi,\zeta)\) describes the graph of a continuous function.
Like \gc, it follows that \lgc\ is also classically false.

Similar to the case of \gc, we will define two classes of formulas \(\lneg\) and \(\lcon\) such that consequences of \(\el + \wkl + \markov + \lgc\) of the form \[\forall\xi(B(\xi) \lthen \exists\zeta A(\xi,\zeta))\] where \(B(\xi)\) is in \(\lneg\) and \(A(\xi,\zeta)\) is in \(\lcon\) are not only consequences of \(\rca + \wkl,\) but the sequential form \[\forall\xi(\forall n B(\xi_n) \lthen \exists\zeta \forall n A(\xi_n,\zeta_n))\] is a also consequence of \(\rca + \wkl.\)
The proof of this fact relies on Lifschitz realizability with functions which was introduced by van Oosten~\cite{VanOosten90}.

\begin{definition}\label{D:LRF}\mbox{}
  \begin{itemize}
  \item \(\alpha \lrf A\) is \(A\) for atomic \(A.\)
  \item \(\alpha \lrf (A \land B)\) is \(\fst{\alpha} \lrf A \land \snd{\alpha} \lrf B.\)
  \item \(\alpha \lrf (A \lthen B)\) is \(\forall \beta(\beta \lrf A \lthen \alpha|\beta\converges \land \alpha|\beta \lrf B).\)
  \item \(\alpha \lrf \forall x A\) is \(\forall x(\alpha_x \lrf A).\)
  \item \(\alpha \lrf \forall \xi A\) is \(\forall \xi(\alpha|\xi\converges \land \alpha|\xi \lrf A).\)
  \item \(\alpha \lrf \exists x A\) is \(\vset{\alpha} \neq \varnothing \land \forall \beta \in \vset{\alpha}(\shift{\beta} \lrf A[x/\beta(0)]).\)
  \item \(\alpha \lrf \exists \xi A\) is \(\vset{\alpha} \neq \varnothing \land \forall \beta \in \vset{\alpha}(\snd{\beta} \lrf A[\xi/\fst{\beta}]).\)
  \end{itemize}
\end{definition}

\noindent
The analogue of the class \(\kneg\) is the broader class \(\lneg.\)%
\footnote{Elements of \(\lneg\) are called `\(\mathrm{B}\Sigma^1_2\)-negative formulas' by van Oosten.}

\begin{definition}\label{D:LNeg}\mbox{}
  \begin{itemize}
  \item If \(A\) is quantifier-free then \(A,\) \(\exists x A,\) \(\exists \xi A\) are in \(\lneg.\)
  \item If \(A\) is quantifier-free and \(\tau\) is a function term in which \(\xi\) does not occur then \(\exists \xi \leq \tau \forall z A\) is in \(\lneg.\)
    Similarly, if \(A\) is quantifier-free and \(t\) is a number term in which \(x\) does not occur then \(\exists x \leq t \forall z A\) is in \(\lneg.\)
  \item If \(A, B\) are in \(\lneg\) then so are \(A \land B,\) \(A \lthen B,\) \(\forall x A,\) \(\forall \xi A.\)
  \end{itemize}
\end{definition}

\noindent
With the aid of the second clause, the disjunction of one or more \(\Pi^0_1\) statements can be formulated in \(\lneg.\)
Thus, statements like the dichotomy law for Cauchy real numbers (discussed in Section~\ref{S:Applications}) can be expressed in \(\lneg\) but not in \(\kneg.\)

Again, the formula \(\alpha \lrf A\) is always in \(\lneg.\)
In fact, the formulas of \(\lneg\) are precisely the formulas which realize themselves in the following sense~\cite[Lemma~5.12]{VanOosten90}.

\begin{lemma}\label{L:LNegative}
  If \(B(\xi) \in \lneg\) then \[\el + \wkl + \markov \proves \exists\alpha(\alpha \lrf B(\xi)) \liff B(\xi).\]
  In fact, there is a function term \(\omega_B\) such that \[\el + \wkl + \markov \proves B(\xi) \liff \omega_B|\xi\converges \land \omega_B|\xi \lrf B(\xi).\]
\end{lemma}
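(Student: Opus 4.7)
The plan is structural induction on the formation of $B$ according to Definition~\ref{D:LNeg}, constructing the term $\omega_B$ in parallel with a verification of the equivalence $B(\xi) \liff \omega_B|\xi\converges \land \omega_B|\xi \lrf B(\xi)$; the biconditional $\exists\alpha(\alpha \lrf B(\xi)) \liff B(\xi)$ then follows at once. Inspection of Definition~\ref{D:LRF} shows that in every clause a realizer $\alpha \lrf B$ entails $B$ as a structural consequence, so the left-to-right direction is essentially automatic by induction. The substantive work is the right-to-left direction: uniformly fabricating a realizer from the mere truth of $B(\xi)$.

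For the base cases, quantifier-free $B$ demands no realizer content, so $\omega_B = \Lambda\xi.\cnst{0}$ suffices. For the unbounded number existential $\exists x A$ with $A$ quantifier-free, \markov\ applied to the decidable predicate $A$ produces the least witness $x_0$ from $\xi$, and $\omega_B|\xi$ encodes the singleton compact set $\{\seq{x_0}\cat\cnst{0}\}$ via a pair whose first component is $\seq{x_0}\cat\cnst{0}$ and whose second component primitively detects non-initial-segments; the inner clause $\shift\beta \lrf A[x/\beta(0)]$ then reduces to the quantifier-free statement $A(x_0)$, which holds by construction. The unbounded function existential $\exists\xi A$ with quantifier-free $A$ is handled analogously, observing that the truth of $A(\xi)$ is decided by a sufficiently long initial segment of $\xi$ and applying \markov\ to the decidable predicate stating that an initial segment forces $A$. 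The crucial base cases are the bounded existentials $\exists\xi \leq \tau\forall z A$ and $\exists x \leq t \forall z A$: the set of sequences bounded by $\tau$ (resp.\ $\cnst{t}$) satisfying the $\Pi^0_1$ condition $\forall z A$ is directly expressible as $\vset\alpha$ for a uniformly constructible $\alpha$, and \wkl\ together with \markov\ guarantees that $\vset\alpha$ is inhabited whenever $B(\xi)$ holds; this immediately yields a realizer, since the inner clause for the quantifier-free $A$ is trivial.

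The inductive cases are routine combinator constructions. Conjunction is handled by $\omega_{A\land B} = \Lambda\xi.\pr{\omega_A|\xi}{\omega_B|\xi}$. For $A \lthen B$, set $\omega_{A \lthen B} = \Lambda\xi.\Lambda\beta.\omega_B|\xi$, invoking the inductive hypothesis for $A$ to observe that whenever $\beta \lrf A$ the formula $A$ actually holds, whence $B$ holds by assumption and $\omega_B|\xi$ is the required realizer. The universal quantifier cases are dispatched by currying on the bound variable. I expect the main obstacle to be the bounded existential case, where one must coordinate the tree presentation of the compact set of witnesses with the realizer presentation demanded by Definition~\ref{D:LRF}, and verify that \wkl\ suffices (together with \markov\ to pass between the $\lnot\lnot$ and $\Pi^0_1$ forms of ``$\vset\alpha \neq\varnothing$'') to carry out the extraction; this is precisely the reason both \wkl\ and \markov\ appear in the hypothesis.
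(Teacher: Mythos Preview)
The paper does not give its own proof of this lemma; it simply cites van~Oosten~\cite[Lemma~5.12]{VanOosten90} for the result. (There is a commented-out fragment in the source immediately after the lemma, but it is incomplete---it omits the bounded-existential clause of Definition~\ref{D:LNeg}---and the realizers it writes down for existentials are \(\krf\)-style rather than \(\lrf\)-style, so it appears to be a leftover sketch for Lemma~\ref{L:KNegative} rather than a proof of the present statement.)

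Your structural induction is the standard argument and is essentially what van~Oosten does. The key observations you identify are the right ones: the bounded-existential clause \(\exists\xi\leq\tau\,\forall z A\) is exactly where \wkl\ enters, since the set of witnesses is literally a \(\vset{\alpha}\), and \markov\ is what lets you extract a least witness in the unbounded \(\exists x A\) and \(\exists\xi A\) base cases with quantifier-free \(A\). One small point worth making explicit in the bounded-existential case: the \(\lrf\)-clause for \(\exists\xi\) requires the elements of the compact set to be \emph{pairs} \(\pr{\xi}{\gamma}\) with \(\gamma\) realizing the body, not bare witnesses \(\xi\); since the body \(\xi\leq\tau\land\forall z A\) is \(\Pi^0_1\) and hence self-realizing by any function, you can take \(\gamma=\cnst{0}\) and the resulting set \(\{\pr{\xi}{\cnst{0}}:\xi\leq\tau\land\forall z A\}\) is still of the form \(\vset{\alpha}\). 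With that adjustment your sketch goes through.
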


\noindent
Again, there is a more general form of this which allows \(B\) to have more than one parameter, but this can be derived from the above by packing all arguments into one.

Lifschitz realizability with functions was characterized by van Oosten~\cite[Theorem~5.15]{VanOosten90}.%
\footnote{Note that the statement of Theorem~5.15(ii) in \cite{VanOosten90} has a typo which is corrected in our statement of Theorem~\ref{T:LCharacterization}(b).}

\begin{theorem}[Characterization of \(\lrf\)]\label{T:LCharacterization}
  For every formula \(A\): 
  \begin{enumerate}[\upshape\bfseries(a)]
  \item \(\el + \wkl + \markov + \lgc \proves A \liff \exists\alpha(\alpha \lrf A)\)
  \item \(\el + \wkl + \markov + \lgc \proves A \IFF \el + \wkl + \markov \proves \exists\alpha(\alpha \lrf A)\)
  \end{enumerate}
\end{theorem}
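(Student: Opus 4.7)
My plan is to follow the standard pattern for characterization theorems for realizability interpretations, treating (a) and (b) together with (a) supplying the core inductive content. For part (a), I would proceed by a simultaneous induction on the complexity of the formula $A$, establishing in $\el + \wkl + \markov + \lgc$ both directions of $A \liff \exists\alpha(\alpha \lrf A)$. A preliminary observation, verified by a subsidiary induction on the structure of $A$, is that $\beta \lrf A$ always belongs to $\lneg$; this is essential so that \lgc\ is applicable whenever such a realizability assertion appears as the hypothesis of an implication.

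The atomic case is immediate, and conjunctions are handled using the pairing $\pr{\cdot}{\cdot}$. For the universal quantifier cases, the forward direction packages pointwise realizers into a uniform one using \lgc\ (or \qfac\ for the number-sorted case); the backward direction is by instantiation. For $\exists x A$ and $\exists \xi A$, the forward direction combines a witness with an inductively obtained realizer of the witnessed instance, encoded as a singleton compact set; the backward direction uses \wkl\ and \markov\ to extract an element from $\vset{\alpha}$, together with Lemma~\ref{L:Image}-style reasoning to recover a witness and apply the backward induction hypothesis to the resulting sub-formula realizer. The implication case is the main obstacle: given $A \lthen B$, I apply \lgc\ to $\forall\beta(\beta \lrf A \lthen \exists\zeta(\zeta \lrf B))$, whose hypothesis is in $\lneg$ by the preliminary observation, yielding $\alpha$ such that each $\vset{\alpha|\beta}$ is a nonempty compact family of realizers of $B$. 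To convert this compact-set-valued map into a realizer in the sense of Definition~\ref{D:LRF}, which demands a single function $\alpha|\beta$ realizing $B$, I use Lemma~\ref{L:Image} together with the self-realizing property of $\lneg$ formulas established in Lemma~\ref{L:LNegative} to fold the compact family uniformly into the existential structure of $\beta \lrf B$.

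For part (b), the direction from right to left is immediate from (a). For the left-to-right direction, I would prove soundness by induction on derivations in $\el + \wkl + \markov + \lgc$, exhibiting for each axiom a realizer term constructible in $\el + \wkl + \markov$ and checking that the rules of intuitionistic logic preserve realizability under composition via $|$. The axioms of \el, along with \wkl\ and \markov, admit standard realizers built from primitive recursive function terms; the crucial step is verifying that \lgc\ itself is realizable, which is essentially tautological since the realizer of \lgc\ maps a realizer of the hypothesis to itself, interpreted through the compact-set encoding.

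The most delicate step will be the implication case of (a) in tandem with the realizability of \lgc\ in (b); both hinge on arranging the compact-set encoding so that the transition from a compact family of realizers to a single realizer of an arbitrary formula can be made uniformly and continuously, which is precisely what \wkl\ and \markov\ enable through Lemma~\ref{L:Image}.
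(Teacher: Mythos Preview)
The paper does not supply its own proof of this theorem: it is quoted directly from van~Oosten~\cite[Theorem~5.15]{VanOosten90}, with only a typographical correction noted in a footnote. So there is no in-paper argument against which to compare your proposal.

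That said, your outline follows the standard architecture of such characterization results (and is essentially the shape of van~Oosten's argument): part~(a) by induction on formula complexity, using that \(\beta \lrf A\) is always in \(\lneg\) so that \lgc\ applies in the implication case; part~(b) by soundness, inducting on derivations and exhibiting realizers for each axiom. Your identification of the implication case as the crux is correct. One step, however, is underspecified. The passage from ``a nonempty compact family of realizers of \(B\)'' to ``a single realizer of \(B\)'' is not achieved merely by Lemma~\ref{L:Image} together with Lemma~\ref{L:LNegative}; the latter applies only to formulas in \(\lneg\), whereas the consequent \(B\) of an implication is arbitrary. What is actually needed is a separate \emph{collection} lemma, proved by its own induction on the structure of \(B\), asserting that there is a term \(\mu\) such that whenever \(\vset{\gamma} \neq \varnothing\) and every element of \(\vset{\gamma}\) realizes \(B\), then \(\mu|\gamma\converges\) and \(\mu|\gamma \lrf B\). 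This lemma is the distinctive technical ingredient of Lifschitz-style realizability---it is precisely what makes the compact-set encoding of existentials cohere with the single-valued clause for implication---and it appears as a standalone lemma in van~Oosten's development. Without it, your sketch of the implication step in~(a) does not close.
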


\noindent
Again, it is unclear whether \markov\ is necessary for this characterization of \({\lrf}.\)

The class \(\lcon\) is defined as follows.
  
\begin{definition}\label{D:LCon}\mbox{}
 \begin{itemize}
  \item Quantifier-free formulas are in \(\lcon.\)
  \item If \(A, B\) are in \(\lcon\) then so are \(A \land B,\) \(\forall x A,\) \(\forall \xi A,\) \(\exists x A,\) and \(\exists \xi A.\)
  \item If \(A\) is in \(\lneg\) and \(B\) is in \(\lcon,\) then \(A \lthen B\) is in \(\lcon.\)
  \end{itemize}
\end{definition}

\noindent
Together with the characterization of \(\lrf,\) the following fact shows that \lgc\ is conservative over \(\el + \wkl + \markov\) for formulas in \(\lcon.\)

\begin{lemma}\label{L:LConservation}
  If \(A \in \lcon\) then \(\el + \wkl + \markov \proves \exists \alpha (\alpha \lrf A) \lthen A.\)
\end{lemma}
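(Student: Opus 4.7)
The plan is to proceed by induction on the construction of $A$ according to Definition~\ref{D:LCon}, working throughout in $\el + \wkl + \markov.$ The base case is immediate: when $A$ is quantifier-free, $\alpha \lrf A$ is literally $A,$ so $\exists\alpha(\alpha \lrf A) \lthen A$ is trivial.

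For the closure under conjunction and universal quantification, the argument is routine unpacking. If $A = B \land C$ and $\alpha \lrf A,$ then $\fst{\alpha} \lrf B$ and $\snd{\alpha} \lrf C,$ so two applications of the induction hypothesis give $B \land C.$ If $A = \forall x B,$ then $\alpha \lrf A$ means $\forall x(\alpha_x \lrf B),$ so for each $x$ the induction hypothesis gives $B,$ hence $\forall x B$; the case $A = \forall \xi B$ is analogous, using that $\alpha|\xi\converges$ and $\alpha|\xi \lrf B$ for every $\xi.$

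The existential cases are where \wkl\ enters. If $A = \exists x B$ and $\alpha \lrf A,$ then $\vset{\alpha} \neq \varnothing$ and every $\beta \in \vset{\alpha}$ satisfies $\shift{\beta} \lrf B[x/\beta(0)].$ Using \wkl\ to actually produce some $\beta \in \vset{\alpha}$ (since $\vset{\alpha}$ is a nonempty bounded tree), the induction hypothesis applied to $B[x/\beta(0)]$ yields $\exists x B.$ The case $A = \exists \xi B$ is handled the same way: extract $\beta \in \vset{\alpha}$ via \wkl, note $\snd{\beta} \lrf B[\xi/\fst{\beta}],$ and apply the induction hypothesis.

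The only delicate case is the implication clause $A = B \lthen C$ with $B \in \lneg$ and $C \in \lcon.$ Suppose $\alpha \lrf A,$ so $\forall\beta(\beta \lrf B \lthen \alpha|\beta\converges \land \alpha|\beta \lrf C),$ and assume $B.$ Since $B \in \lneg,$ Lemma~\ref{L:LNegative} supplies a realizer, namely $\omega_B$ (applied to the free parameters of $B$), so that $\omega_B\converges$ and $\omega_B \lrf B.$ Hence $\alpha|\omega_B\converges$ and $\alpha|\omega_B \lrf C,$ and the induction hypothesis on $C$ gives $C.$ This step is the main obstacle, since it is the only place where the hypothesis $B \in \lneg$ is exploited, and it is also where the theory's reliance on $\wkl + \markov$ is inherited (through Lemma~\ref{L:LNegative}); the other cases use only $\el + \wkl.$ Collecting the cases completes the induction.
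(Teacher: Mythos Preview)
Your proof is correct and follows the same inductive strategy as the paper's sketch; you simply spell out all the cases where the paper only treats the implication clause. One small correction: you claim the existential cases are ``where \wkl\ enters,'' but by the paper's conventions $\vset{\alpha} \neq \varnothing$ is \emph{defined} as the existential statement $\exists\xi(\xi \in \vset{\alpha})$, so a witness $\beta \in \vset{\alpha}$ is already handed to you intuitionistically and no appeal to \wkl\ is needed at that step. The uses of \wkl\ and \markov\ in this lemma are inherited entirely through Lemma~\ref{L:LNegative} in the implication case, exactly as you note there.
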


\begin{proof}[Proof sketch.]
  The proof of this lemma is a straightforward induction on the complexity of \(A.\)
  We only prove the implication case.

  Work in \(\el + \wkl + \markov.\)
  Suppose \(\alpha \lrf (B \lthen A),\) where \(A\) is from \(\lcon\) and \(B\) is from \(\lneg.\)
  We need to show that \(B \lthen A.\)
  By definition of \(\lrf,\) we then have that if \(\beta \lrf B\) then \(\alpha|\beta\converges\) and \(\alpha|\beta \lrf A.\)
  Assume \(B.\)
  By Lemma~\ref{L:LNegative}, there is a function term \(\omega\) such that \(\omega \lrf B.\)
  It follows that \(\alpha|\omega\converges\) and \(\alpha|\omega \lrf A.\) 
  Therefore \(A,\) by the induction hypothesis.
\end{proof}

Together, the above results imply the following.

\begin{proposition}\label{P:LExtract}
  Suppose \(B(\xi) \in \lneg\) and \(A(\xi,\zeta) \in \lcon.\)
  If \[\el + \wkl + \markov + \lgc \proves \forall \xi(B(\xi) \lthen \exists \zeta A(\xi,\zeta))\] then \[\el + \wkl + \markov \proves \exists\alpha\forall\xi(B(\xi) \lthen \alpha|\xi\converges \land \vset{\alpha|\xi} \neq \varnothing \land \forall \zeta \in \vset{\alpha|\xi} A(\xi,\zeta)).\]
\end{proposition}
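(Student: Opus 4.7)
The plan is to follow the pattern of Proposition~\ref{P:KExtract}, with the essential modification that a realizer of an existential in the Lifschitz interpretation encodes a nonempty compact set of candidate witnesses rather than a single one.

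First, I would apply Theorem~\ref{T:LCharacterization}(b) to the hypothesis to obtain
\[\el + \wkl + \markov \proves \exists\beta(\beta \lrf \forall\xi(B(\xi) \lthen \exists\zeta A(\xi,\zeta))).\]
Then, working inside $\el + \wkl + \markov$, I would fix such a $\beta$ and unpack Definition~\ref{D:LRF}: for every $\xi$, $\beta|\xi\converges$, and for every $\gamma$ with $\gamma \lrf B(\xi)$ we have $\beta|\xi|\gamma\converges$, $\vset{\beta|\xi|\gamma} \neq \varnothing$, and every $\delta \in \vset{\beta|\xi|\gamma}$ satisfies $\snd{\delta} \lrf A(\xi,\fst{\delta})$.

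Since $B(\xi) \in \lneg$, Lemma~\ref{L:LNegative} supplies a function term $\omega_B$ with $B(\xi) \liff \omega_B|\xi\converges \land \omega_B|\xi \lrf B(\xi)$. Abbreviating $\eta := \beta|\xi|(\omega_B|\xi)$ for the duration of this step, $\eta\converges$ whenever $B(\xi)$ holds, and $\vset{\eta}$ is then a nonempty compact set of pairs whose first projections are candidate witnesses $\zeta$ and whose second projections realize $A(\xi,\zeta)$.

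The key new step, absent from the Kleene case, is to extract the compact set of first projections from $\vset{\eta}$. For this I would apply Lemma~\ref{L:Image} with the total continuous map represented by $\phi := \Lambda\delta.\fst{\delta}$, obtaining $\iota|\pr{\phi}{\eta}\converges$ and $\vset{\iota|\pr{\phi}{\eta}} = \set{\fst{\delta} : \delta \in \vset{\eta}}$. I would then define
\[\alpha := \Lambda\xi.\,\iota|\pr{\phi}{\beta|\xi|(\omega_B|\xi)}.\]
Assuming $B(\xi)$, this gives $\alpha|\xi\converges$, $\vset{\alpha|\xi} \neq \varnothing$, and for each $\zeta \in \vset{\alpha|\xi}$ there is $\delta \in \vset{\eta}$ with $\fst{\delta} = \zeta$, so $\snd{\delta} \lrf A(\xi,\zeta)$. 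Since $A(\xi,\zeta) \in \lcon$, Lemma~\ref{L:LConservation} converts the existence of this realizer into $A(\xi,\zeta)$, which is exactly the desired conclusion.

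The main obstacle is precisely the passage from a realizer's compact set of pairs to the compact set of first projections; this is what forces $\wkl$ (and, apparently, \markov) onto the conclusion side, and it is where Lemma~\ref{L:Image} does the essential work. The rest is a routine translation of the Kleene-case bookkeeping, with Definition~\ref{D:LRF} in place of Definition~\ref{D:KRF}.
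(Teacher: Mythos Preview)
Your proposal is correct and follows essentially the same route as the paper: apply Theorem~\ref{T:LCharacterization}(b), unpack the \(\lrf\)-realizer, feed in the self-realizer \(\omega_B\) from Lemma~\ref{L:LNegative}, project out the first coordinates via Lemma~\ref{L:Image}, and invoke Lemma~\ref{L:LConservation}. Your explicit definition of \(\alpha\) coincides with the paper's, and your discussion of why Lemma~\ref{L:Image} is the new ingredient compared to the Kleene case is accurate.
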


\begin{proof}
  By Theorem~\ref{T:LCharacterization}, we know that \[\el + \wkl + \markov \proves \exists\beta(\beta \lrf \forall \xi (B(\xi) \lthen \exists \zeta A(\xi,\zeta))).\]
  Work in \(\el + \wkl + \markov\) and assume \(\beta \lrf \forall \xi(B(\xi) \lthen \exists\zeta A(\xi,\zeta)).\)
  Unpacking the definition of \(\lrf,\) we see that if \(\gamma \lrf B(\xi)\) then \(\beta|\xi|\gamma\converges,\) \(\vset{\beta|\xi|\gamma} \neq \varnothing,\) and \[\forall \zeta \in \vset{\beta|\xi|\gamma} (\snd{\zeta} \lrf A(\xi,\fst{\zeta})).\]
  Since \(B(\xi) \in \lneg,\) it follows from Lemma~\ref{L:LNegative} that there is a term \(\omega_B\) such that \[B(\xi) \liff \omega_B|\xi\converges \land \omega_B|\xi \lrf B(\xi).\]
  Finally, since \(A(\xi,\zeta) \in \lcon,\) it follows from Lemma~\ref{L:LConservation} that \[\alpha = \Lambda\xi.\iota|\pr{\Lambda\zeta.\fst{\zeta}}{\beta|\xi|(\omega_B|\xi)}\] is as required, where \(\iota\) is as in Lemma~\ref{L:Image}.
\end{proof}

\noindent
As for Proposition~\ref{P:KExtract}, we can add to the theories in Proposition~\ref{P:LExtract} any collection of sentences from \(\lneg.\)

\begin{definition}
  Let \(\lmax\) be the collection of all sentences \(A\) from \(\lneg\) such that \(\rca + \wkl \proves A.\)
  In other words, \(\lmax\) consists of all consequences of the law of excluded middle which belong to the syntactic class \(\lneg.\)
\end{definition}

\noindent
Note that \(\lmax\) includes the Markov principle \(\markov\) as well as the lesser limited principle of omniscience \(\llpo\) (see Section~\ref{S:Applications}).

\begin{corollary}\label{C:LUniformization}
  Suppose \(B(\xi)\) is from \(\lneg\) and \(A(\xi,\zeta)\) is from \(\lcon.\)
  If \[\el + \wkl + \lgc + \lmax \proves \forall\xi(B(\xi) \lthen \exists\zeta A(\xi,\zeta))\] then \[\rca + \wkl \proves \forall\xi(\forall n B(\xi_n) \lthen \exists\zeta \forall n A(\xi_n,\zeta_n)).\]
\end{corollary}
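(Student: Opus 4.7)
The proof will mirror Corollary~\ref{C:KUniformization}, except that a single witness must be extracted from each compact set of witnesses produced by \lgc. First, I would strengthen Proposition~\ref{P:LExtract} by adding the axioms of \lmax\ to both theories in its statement; the remark following Proposition~\ref{P:LExtract} sanctions this since \(\lmax \subseteq \lneg\). Because \(\markov \in \lmax\), the theory \(\el + \wkl + \lgc + \lmax\) already proves \markov, so the strengthened form applied to the corollary's hypothesis yields
\[
\el + \wkl + \markov + \lmax \proves \exists\alpha\,\forall\xi\bigl(B(\xi) \lthen \alpha|\xi\converges \land \vset{\alpha|\xi} \neq \varnothing \land \forall\zeta \in \vset{\alpha|\xi}\,A(\xi,\zeta)\bigr).
\]

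Next I would work in \(\rca + \wkl\), which extends \(\el + \wkl + \markov + \lmax\) because \lem\ yields \markov\ and, by definition, \(\rca + \wkl\) proves every sentence of \lmax. Fix \(\alpha\) as above and suppose \(\forall n\,B(\xi_n)\). For each \(n\), the set \(\vset{\alpha|\xi_n}\) is non-empty, bounded by \(\fst{(\alpha|\xi_n)}\), and every element of it witnesses \(A(\xi_n,\cdot)\). It remains to choose \(\zeta_n \in \vset{\alpha|\xi_n}\) uniformly in \(n\) and assemble the results into a single \(\zeta = \seq{\zeta_n}_{n=0}^\infty\).

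To do this, I would introduce a single bounded tree \(T\) whose infinite paths correspond exactly to those \(\zeta\) with \(\zeta_n \in \vset{\alpha|\xi_n}\) for every \(n\), using the paper's decoding \(\zeta_n(k) = \zeta(2^n(2k+1)-1)\). A finite string \(\sigma\) lies in \(T\) iff, for every coordinate \(i = 2^n(2k+1) - 1 < |\sigma|\) of \(\sigma\), the bounding inequality and the non-termination condition defining \(\vset{\alpha|\xi_n}\) are respected on the partial \(n\)-th component determined by \(\sigma\). The tree \(T\) is bounded by a single function assembled pointwise from the \(\fst{(\alpha|\xi_n)}\), so a single application of \wkl\ to \(T\) produces a \(\zeta\) with \(\zeta_n \in \vset{\alpha|\xi_n}\) for all \(n\), whence \(A(\xi_n,\zeta_n)\) holds for every \(n\) as required.

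The main obstacle is verifying, in \(\rca + \wkl\), that \(T\) is infinite. From the non-emptiness of each individual \(\vset{\alpha|\xi_n}\) one can, for each level \(N\), use \wkl\ on the individual trees to pick witnesses \(\eta_0,\ldots,\eta_{N-1}\) with \(\eta_n \in \vset{\alpha|\xi_n}\), and then interleave sufficiently long initial segments of these \(\eta_n\) to produce a node of \(T\) at level \(N\). This uniform selection is the crucial point where classical reasoning and \wkl\ combine; the rest is bookkeeping with the pairing encoding.
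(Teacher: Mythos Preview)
Your proposal is correct and follows the same route as the paper: apply the \(\lmax\)-strengthened form of Proposition~\ref{P:LExtract}, move to \(\rca + \wkl\), and then simultaneously select an element from each nonempty \(\vset{\alpha|\xi_n}\). The only difference is that the paper dispatches the last step by citing \cite[Lemma~VIII.2.4]{Simpson09} (uniform choice from a sequence of nonempty \(\Pi^0_1\) classes via \(\wkl\)), whereas you unfold that lemma by hand with the interleaved tree \(T\); your verification that \(T\) is infinite is exactly the standard argument behind that citation.
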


\begin{proof}
  Suppose that \[\el + \wkl  + \lgc + \lmax \proves \forall\xi(B(\xi) \lthen \exists\zeta A(\xi,\zeta)).\]
  By Proposition~\ref{P:LExtract}, we know that \[\el + \wkl + \lmax \proves \exists\alpha\forall\xi(B(\xi) \lthen \alpha|\xi\converges \land \vset{\alpha|\xi} \neq \varnothing \land \forall\zeta \in \vset{\alpha|\xi} A(\xi,\zeta)).\]
  Now work in \(\rca + \wkl,\) which extends \(\el + \wkl + \lmax.\)
  Find \(\alpha\) such that if \(B(\xi)\) then \[\alpha|\xi\converges \land \vset{\alpha|\xi} \neq \varnothing \land \forall\zeta \in \vset{\alpha|\xi} A(\xi,\zeta).\]
  If \(\xi\) is such that \(\forall n B(\xi_n),\) then \[\forall n(\alpha|\xi_n\converges \land \vset{\alpha|\xi_n} \neq \varnothing).\]
  By \cite[Lemma~VIII.2.4]{Simpson09}, we can find a \(\zeta\) such that \(\zeta_n \in \vset{\alpha|\xi_n}\) for every \(n.\)
  It then follows that \(\forall n A(\xi_n,\zeta_n).\)
  We have just shown that \[\rca + \wkl \proves \forall\xi(\forall n B(\xi_n) \lthen \exists\zeta \forall n A(\xi_n,\zeta_n)).\qedhere\]
\end{proof}

\begin{remark}\label{R:LInduction}
  As in Remark~\ref{R:KInduction}, it would be desirable to eliminate the induction assumptions from Corollary~\ref{C:LUniformization}.
  Unfortuantely, van Oosten's arguments from~\cite{VanOosten90} do appear to make some use of this inductive assumption.
  Close inspection reveals that these uses are limited to \(\Pi^0_1\)-bounding, therefore Corollary~\ref{C:LUniformization} does have an analogue with \(\rca + \wkl\) replaced by \(\rca_0 + \wkl +  \mathsf{B}\Pi^0_1.\)
\end{remark}

\section{Applications}\label{S:Applications}

To compare the earlier results of Hirst and Mummert with ours, it is useful to compare the syntactic restrictions involved, specifically~\cite[Theorem~3.6]{HirstMummert11} since the syntactic conditions for~\cite[Theorem~5.6]{HirstMummert11} are even more restrictive.

The analogue of \(\kneg\) and \(\lneg\) for Hirst and Mummert are \(\exists\)-free formulas: formulas built in the usual manner but without the use existential quantifiers nor disjunctions.
The \(\exists\)-free fromulas are a proper subset of \(\kneg\) and hence \(\lneg\) since some existential quantifiers are allowed by the first clause of Definition~\ref{D:KNeg}, and still more are allowed by the second clause of Definition~\ref{D:LNeg}.

The analogue of \(\kcon\) and \(\lcon\) for Hirst and Mummert is the class \(\Gamma_1,\) which is defined in exactly the same way except that hypotheses of conditionals are restricted to \(\exists\)-free formulas.
Thus, \(\Gamma_1\) is also a proper subset of \(\kcon\) and hence \(\lcon.\)

To illustrate the difference, consider the familiar statement:
\begin{quote}
  \textit{Every \(n \times n\) matrix with nonzero determinant has an inverse.}
\end{quote}
The ``nonzero determinant'' hypothesis is not expressible by an \(\exists\)-free formula since to say that a Cauchy real or complex number (see below) is apart from zero requires an existential quantifier.
However, this hypothesis is expressible in \(\kneg.\)
Since \el\ proves that every \(n \times n\) matrix with nonzero determinant has an inverse, it follows that the sequential form of the above statement is provable in \rca.
The reader should not feel too enlightened by this simple example since the obvious proof is nothing more than Cramer's rule.

On the other hand, the results of Hirst and Mummert allow for higher types, while ours only involve first-order and second-order types.
Therefore, there is a vast sea of statements for which the results of Hirst and Mummert apply but ours do not.
Still, the non-provability examples that Hirst and Mummert give are all second-order, so they all have equivalents in our context.
In particular, neither \(\el + \gc + \kmax\) nor \(\el + \wkl + \lgc + \lmax\) prove that every \(n \times n\) matrix has a Jordan canonical form.

\subsection{Trichotomy and dichotomy for Cauchy reals}
 
A \emph{Cauchy real} is a rational valued function \(\alpha\) such that \(|\alpha(s) - \alpha(t)| \leq 2^{-s}\) for all \(s < t.\)
We write \(\alpha \in \R^C\) to abbreviate the statement that \(\alpha\) is a Cauchy real.
If \(\alpha, \beta \in \R^C\) then we define \[\alpha = \beta \liff \forall s (|\alpha(s) - \beta(s)| \leq 2^{1-s}).\]
We also define \[\alpha > \beta \liff \exists s (\alpha(s) - \beta(s) > 2^{1-s})\] and \[\alpha \leq \beta \liff \forall s (\alpha(s) - \beta(s) \leq 2^{1-s}).\]
Note that \(\alpha \leq \beta \liff \lnot(\alpha > \beta)\) and \(\alpha > \beta \lthen \lnot(\alpha \leq \beta),\) but the implication \(\lnot(\alpha \leq \beta) \lthen \alpha > \beta\) is equivalent to the Markov principle \markov.

The \emph{trichotomy law} \[\alpha < \beta \lor \alpha = \beta \lor \alpha > \beta\] and the formally weaker \emph{dichotomy law} \[\alpha \leq \beta \lor \alpha \geq \beta\] are both consequences of the law of excluded middle.
However, over \(\el_0\) these are respectively equivalent to the \emph{limited principle of omniscience}
\begin{axiom}{\lpo}
  \exists n (\xi(n) \neq 0) \lor \forall n (\xi(n) = 0)
\end{axiom}
and the \emph{lesser limited principle of omniscience}
\begin{axiom}{\llpo}
  \lnot(\exists n(\xi(n) \neq 0) \land \exists n(\zeta(n) \neq 0)) \lthen 
  \forall n (\xi(n) = 0) \lor \forall n (\zeta(n) = 0)
\end{axiom}
(see \cite{DoraisHirstShafer} for details).

\begin{proposition}\label{P:LLPO}
  The following equivalent statements are both provable in \(\rca_0,\) but neither is provable in \(\el + \gc + \kmax.\)
  \begin{enumerate}[\upshape\bfseries(a)]
  \item The dichotomy law for Cauchy reals.
  \item The lesser limited principle of omniscience.
  \end{enumerate}
\end{proposition}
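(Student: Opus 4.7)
The plan is to prove (b) in $\rcao$ and refute it in $\el + \gc + \kmax,$ since the equivalence of (a) and (b) over $\el_0$ is cited from~\cite{DoraisHirstShafer}. Provability in $\rcao$ is immediate: given $\xi,\zeta$ satisfying the hypothesis of \llpo, the law of excluded middle applied to $\exists n(\xi(n) \neq 0)$ yields the conclusion.

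For the non-provability, the strategy is to apply Corollary~\ref{C:KUniformization}. First I would recast \llpo\ in the required form by packing the pair $\xi,\zeta$ into a single function $\eta = \pr{\xi}{\zeta}$ and setting
\[
  B(\eta) \ldef \lnot\bigl(\exists n(\fst{\eta}(n) \neq 0) \land \exists n(\snd{\eta}(n) \neq 0)\bigr),
\]
\[
  A(\eta,\sigma) \ldef \bigl(\sigma(0) = 0 \lthen \forall n(\fst{\eta}(n) = 0)\bigr) \land \bigl(\sigma(0) \neq 0 \lthen \forall n(\snd{\eta}(n) = 0)\bigr),
\]
encoding the number witness $x$ from the usual disjunction expansion of \llpo's conclusion as $\sigma(0).$ Inspection against Definitions~\ref{D:KNeg} and~\ref{D:KCon} gives $B \in \kneg$ and $A \in \kcon,$ and \llpo\ is provably equivalent over $\el_0$ to $\forall\eta(B(\eta) \lthen \exists\sigma A(\eta,\sigma)).$

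Supposing for contradiction that $\el + \gc + \kmax \proves \llpo,$ Corollary~\ref{C:KUniformization} would yield
\[
  \rca \proves \forall\eta\bigl(\forall n\, B(\eta_n) \lthen \exists\sigma\forall n\, A(\eta_n,\sigma_n)\bigr),
\]
i.e.\ the sequential \llpo, which for any sequence of disjoint pairs $(\xi_n,\zeta_n)$ produces a single function uniformly deciding which side vanishes. Since sequential \llpo\ is equivalent over $\rca$ to \wkl\ (by the standard reverse-mathematics interchange of binary trees with sequences of disjoint $\Sigma^0_1$ separation instances) and $\rca \not\proves \wkl,$ this would be the desired contradiction.

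The main obstacle I expect is articulating the equivalence of sequential \llpo\ with \wkl\ over $\rca$; the remaining work amounts to syntactic checks against Definitions~\ref{D:KNeg} and~\ref{D:KCon} and elementary manipulations of the disjunction abbreviation.
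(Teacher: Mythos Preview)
Your proposal is correct and follows essentially the same strategy as the paper: put the statement into the syntactic form required by Corollary~\ref{C:KUniformization}, then invoke the fact (from \cite{DoraisHirstShafer}) that the resulting sequential form is equivalent to \wkl\ over \rca, which \rca\ does not prove. The only cosmetic difference is that the paper applies the corollary directly to the dichotomy law~(a), whereas you apply it to \llpo~(b); since the two are equivalent over \(\el_0\), both routes work and the syntactic checks go through in either case.
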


\begin{proof}
  The dichotomy law can be stated as \[\forall\alpha,\beta(\alpha,\beta \in \R^C \lthen \exists y((y = 0 \lthen \alpha \leq \beta) \land (y \neq 0 \lthen \alpha \geq \beta))).\]
  Inspection shows that this is in the form required for Corollary~\ref{C:KUniformization}.
  Dorais, Hirst, and Shafer~\cite{DoraisHirstShafer} have shown that the corresponding sequential form is equivalent to \(\wkl\) over \(\rca_0,\) it follows that the dichotomy law is not provable in \(\el + \gc + \kmax.\)
\end{proof}

\begin{proposition}\label{P:LPO}
  The following equivalent statements are both provable in \(\rca,\) but neither is provable in \(\el + \wkl + \lgc + \lmax.\)
  \begin{enumerate}[\upshape\bfseries(a)]
  \item The trichotomy law for Cauchy reals. 
  \item The limited principle of omniscience. 
  \end{enumerate}
\end{proposition}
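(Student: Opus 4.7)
The plan is to follow the template of Proposition~\ref{P:LLPO}. Provability of both (a) and (b) in $\rca$ is immediate from \lem, and the equivalence of (a) and (b) over $\elo$ was already recorded just before the statement via \cite{DoraisHirstShafer}. Hence the substantive task reduces to showing that $\el + \wkl + \lgc + \lmax$ does not prove the trichotomy law.

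First I would cast the trichotomy law in the syntactic form required by Corollary~\ref{C:LUniformization}:
\[\forall\alpha,\beta\bigl(\alpha,\beta \in \R^C \lthen \exists y((y = 0 \lthen \alpha < \beta) \land (y = 1 \lthen \alpha = \beta) \land (y \geq 2 \lthen \alpha > \beta))\bigr).\]
The hypothesis $\alpha,\beta \in \R^C$ is $\Pi^0_1$ and therefore lies in $\lneg.$ Inside the existential matrix, each conclusion is $\Sigma^0_1$ (for $\alpha < \beta$ and $\alpha > \beta$) or $\Pi^0_1$ (for $\alpha = \beta$), and all three accordingly lie in $\lcon$ by the closure clauses of Definition~\ref{D:LCon}. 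The three hypotheses $y = 0,$ $y = 1,$ $y \geq 2$ are quantifier-free and hence in $\lneg,$ so the third clause of Definition~\ref{D:LCon} places each implication in $\lcon;$ taking the conjunction and prefixing $\exists y$ keeps the formula in $\lcon.$

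Finally I would argue by contradiction. If $\el + \wkl + \lgc + \lmax$ proved trichotomy, Corollary~\ref{C:LUniformization} would yield its sequential form in $\rca + \wkl,$ namely that for every sequence of pairs of Cauchy reals one can simultaneously produce a single $\zeta$ witnessing trichotomy pair by pair. By \cite{DoraisHirstShafer}, this sequential trichotomy law is equivalent over $\rcao$ to sequential \lpo, which is itself equivalent to arithmetical comprehension. The main obstacle, and the only part that lies outside the mechanical application of the machinery of Section~\ref{S:Lifschitz}, is the final separation step: I would invoke the low basis theorem to produce an $\omega$-model of $\wkl$ omitting $0',$ which is automatically a model of $\rca + \wkl$ but does not satisfy arithmetical comprehension, yielding the desired non-provability.
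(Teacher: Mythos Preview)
Your proposal is correct and follows essentially the same route as the paper: cast trichotomy in the $\lneg$/$\lcon$ form required by Corollary~\ref{C:LUniformization}, then invoke the result of \cite{DoraisHirstShafer} that the sequential form is equivalent to arithmetic comprehension. The only difference is cosmetic (you permute which value of $y$ codes which alternative) and that you make the final separation step explicit via the low basis theorem, whereas the paper treats the non-provability of $\mathsf{ACA}$ in $\rca+\wkl$ as known.
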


\begin{proof}
  The trichotomy law can be stated as: for all \(\alpha, \beta \in \R^C,\) \[\exists y((y = 0 \lthen \alpha < \beta) \land (y = 1 \lthen \alpha > \beta) \land (y > 1 \lthen \alpha = \beta)).\]
  This statement is in the form required for Corollary~\ref{C:LUniformization}.
  Dorais, Hirst, and Shafer~\cite{DoraisHirstShafer} have shown that the corresponding sequential form is equivalent to arithmetic comprehension over \(\rca_0,\) it follows that the trichotomy law is not provable in \(\el + \wkl + \lgc + \lmax.\)
\end{proof}

\subsection{Dedekind reals and Cauchy reals}

A \emph{Dedekind real} is a decidable set \(\delta\) of rationals such that \[\exists p, q \in \Q (p \in \delta \land q \notin \delta) \land \forall p, q \in \Q (p \in \delta \land q \notin \delta \lthen p < q).\]
We write \(\delta \in \R^D\) to abbreviate the fact that \(\delta\) is a Dedekind real.
We say that a Cauchy real \(\alpha\) and a Dedekind real \(\delta\) are equivalent when \[\forall s \in \N \forall p, q \in \Q (p \in \delta \land q \notin \delta \lthen \lnot(\alpha(s) + 2^{1-s} < p \lor q < \alpha(s) - 2^{1-s})).\]

\begin{proposition}\mbox{}
  \begin{enumerate}[\upshape\bfseries(a)]
  \item \(\el_0\) proves that every Dedekind real has an equivalent Cauchy real.
  \item \(\el_0 + \wkl\) proves that every Cauchy real has an equivalent Dedekind real.
  \end{enumerate}
\end{proposition}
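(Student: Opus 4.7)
The plan for part \textbf{(a)} is a standard binary-search construction inside $\el_0.$ By hypothesis there exist rationals $p^\ast \in \delta$ and $q^\ast \notin \delta,$ which may be introduced via existential elimination (no explicit search is needed since we only require existence of the resulting $\alpha$). Using the recursion scheme $\recax,$ I would recursively define $(p_{s+1}, q_{s+1})$ from $(p_s, q_s)$ by bisecting and using decidability of $\delta$ to select the half-interval whose endpoints still straddle the cut. Setting $\alpha(s) := p_{ks}$ for $k$ chosen so that $(q^\ast - p^\ast)2^{-k} \leq 1$ produces a sequence satisfying $|\alpha(s) - \alpha(t)| \leq 2^{-s},$ and the equivalence with $\delta$ reduces via $\qfia$ to the cut property $p \in \delta \land q \notin \delta \lthen p < q$ maintained at every stage.

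For part \textbf{(b)}, I would fix a repetition-free primitive-recursive enumeration $q_0, q_1, \ldots$ of $\Q$ and define the decidable binary tree $T \subseteq \{0,1\}^{<\N}$ consisting of those $\sigma$ such that for all $i, j < |\sigma|$ and all $s \leq |\sigma|$: (i) $q_i < q_j \land \sigma(j) = 1 \lthen \sigma(i) = 1;$ (ii) $q_i < \alpha(s) - 2^{1-s} \lthen \sigma(i) = 1;$ and (iii) $q_i > \alpha(s) + 2^{1-s} \lthen \sigma(i) = 0.$ For each $n,$ the explicit assignment $\sigma(i) := 1 \liff \exists s \leq n\,(q_i \leq \alpha(s) - 2^{1-s})$ gives an element of $T$ of length $n$; here any putative joint witness for (ii) and (iii) would contradict the Cauchy inequality $|\alpha(s) - \alpha(s')| \leq 2^{-\min(s,s')}.$ Applying $\wkl$ to $T$ produces a path $\beta,$ and I would set $\delta := \{q_i : \beta(i) = 1\}.$ Then $\delta$ is decidable; it is nontrivial because any rational below $\alpha(0) - 2$ is forced into $\delta$ by (ii) and any above $\alpha(0) + 2$ is excluded by (iii); it satisfies the cut condition by (i) together with the non-repetition of the enumeration; and it is equivalent to $\alpha$ directly from (ii) and (iii).

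The chief obstacle is the verification that $T$ has elements of every length in part (b)---this is precisely where the Cauchy condition on $\alpha$ does the real work, ensuring that the forcing constraints (ii) and (iii) can never collide. Once this is in place, the application of $\wkl$ and the verification of the Dedekind real axioms reduce to routine quantifier-free calculations inside $\el_0 + \wkl.$ Part (a), by contrast, presents no comparable difficulty: it is essentially a formalization of the usual Bishop-style construction and requires only careful bookkeeping of the binary-search rate.
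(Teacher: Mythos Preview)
Your proposal is correct and follows essentially the same route as the paper. For part~(a) the paper simply declares the argument ``straightforward'' and omits it; your binary-search sketch is the expected construction. For part~(b) both you and the paper build a bounded binary tree whose infinite paths encode candidate Dedekind cuts, the forcing clauses being exactly your (ii) and~(iii); the paper likewise leaves the existence of length-$n$ nodes as an unproved assertion, whereas you supply the explicit witness and the Cauchy-collision argument.

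The only substantive difference is that you add the downward-closure clause~(i), which the paper does \emph{not} include in its predicate $R(\alpha,x)$. Your clause~(i) buys you an immediate verification of the cut property for the resulting $\delta$; in the paper's version one must instead argue, for an infinite path $\xi$, that $\xi(i)=1$ and $\xi(j)=0$ force $q_i < q_j$ by observing that otherwise some large $s$ would trigger one of the forcing clauses and contradict the path condition. Both arguments go through in $\el_0+\wkl$, so this is a matter of packaging rather than a genuinely different strategy.
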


\begin{proof}
  The proof of part~(a) is straightforward, so we only prove part~(b).

  Suppose that \(\alpha\) is a Cauchy real.
  Fix an enumeration \(\seq{q_i}_{i=0}^\infty\) of \(\Q.\)
  Let \(R(\alpha,x)\) denote the statement 
  \begin{multline*}
    \forall i < |x|((x(i) = 0 \lor x(i) = 1) \land\\ 
    (\exists s \leq |x|(q_i < \alpha(s) - 2^{1-s}) \lthen x(i) = 1) \land\\ 
    (\exists s \leq |x| (q_i > \alpha(s) + 2^{1-s}) \lthen x(i) = 0).
  \end{multline*}
  Then the decidable set \(\delta = \set{q_i : \xi(i) = 1}\) is a Dedekind real equivalent to \(\alpha\) if and only if \(\forall n R(\alpha,\res{\xi}{n}).\)
  Since \(\forall n \exists x (|x| = n \land R(\alpha,x))\) it follows from \wkl\ that there is a Dedekind real \(\delta\) which is equivalent to \(\alpha.\)
\end{proof}

\noindent
Of course, \(\rca_0\) proves that every Cauchy real has an equivalent Dedekind real.
However, the usual proof of this fact is non uniform since it relies on first deciding whether or not the Cauchy real represents a rational number.
Such lack of uniformity is actually necessary as the next proposition shows.

\begin{proposition}\label{P:CauchyDedekind}
  The system \(\el + \gc + \kmax\) does not prove that every Cauchy real has an equivalent Dedekind real.
\end{proposition}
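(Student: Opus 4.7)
The plan is to apply Corollary~\ref{C:KUniformization}. Writing the statement as
\[
\forall \alpha\bigl(\alpha \in \R^C \lthen \exists \delta(\delta \in \R^D \land \alpha \sim \delta)\bigr),
\]
where $\alpha \sim \delta$ abbreviates the equivalence clause, I would first verify the syntactic restrictions. The hypothesis $\alpha \in \R^C$ is a universal closure of a quantifier-free matrix, hence belongs to $\kneg$. The conclusion is the conjunction of the nontriviality clause $\exists p,q \in \Q(p \in \delta \land q \notin \delta)$, the cut condition $\forall p,q(p \in \delta \land q \notin \delta \lthen p < q)$, and the equivalence clause. The nontriviality clause is an existential over a decidable matrix, hence in $\kcon$; each of the other two is a universal quantification of an implication whose hypothesis and conclusion are both quantifier-free, so both lie in $\kcon$, and thus so does the entire conjunction.

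Consequently, if $\el + \gc + \kmax$ proved the statement, Corollary~\ref{C:KUniformization} would yield a proof in $\rca$ of the sequential form: every sequence $\seq{\alpha_n}$ of Cauchy reals admits a parallel sequence $\seq{\delta_n}$ of equivalent Dedekind reals. To close the argument I would then show, in $\rcao$, that this sequential principle already implies $\wkl$---a reverse-mathematics reduction in the spirit of the results in~\cite{DoraisHirstShafer} and consistent with the essential use of $\wkl$ in the proof of the preceding proposition. Since $\rca$ does not prove $\wkl$, this is the desired contradiction.

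The main obstacle is this reverse-mathematics reduction. The plan is to encode an infinite $0$-$1$ tree $T \subseteq 2^{<\N}$ as a sequence $\seq{\alpha_n}$ of Cauchy reals whose approximants $\alpha_n(s)$ are driven by the set of level-$n$ nodes of $T$ that remain extendable at stage $s$; the decidability of each $\delta_n$ should then, via a single rational comparison inside $\delta_n$, pin down a coherent extendable node at level $n$, and splicing these choices across $n$ yields an infinite branch through $T$. The delicate point is arranging the encoding so that the rational comparisons performed against $\delta_n$ commute with the levelwise cohering, a step I would model on the analogous constructions in~\cite{DoraisHirstShafer}.
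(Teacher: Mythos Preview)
Your approach is the same as the paper's: verify the syntactic hypotheses of Corollary~\ref{C:KUniformization}, conclude that provability in \(\el+\gc+\kmax\) would yield provability of the sequential form in \(\rca\), and then derive a contradiction from the fact that the sequential form implies \(\wkl\).

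The only issue is that what you flag as ``the main obstacle''---the reverse-mathematics reduction from the sequential statement to \(\wkl\)---is not something you need to invent. This equivalence is already established by Hirst~\cite{Hirst07}, and the paper simply cites that result. Your sketch of encoding a bounded tree into a sequence of Cauchy reals and reading off a branch from the associated Dedekind cuts is in the right spirit, but as written it is only a plan, and the ``delicate point'' you mention (making the rational comparisons cohere across levels) would indeed require real work. There is no need to carry it out: replace the last two paragraphs by the citation and the proof is complete.
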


\begin{proof}
  Formally, the statement that every Cauchy real has an equivalent Dedekind real is: for every \(\alpha \in \R^C\) there is a \(\delta \in \R^D\) such that \[\forall p,q \in \Q \forall s \in \N (p \in \delta \land q \notin \delta \lthen \lnot(\alpha(s) + 2^{1-s} < p \lor q < \alpha(s) - 2^{1-s})).\]
  Inspection shows that this has the right form for Corollary~\ref{C:KUniformization}.
  However, Hirst~\cite{Hirst07} has shown that the sequential form of this statement is equivalent to \wkl\ over \rcao.
  It follows that the statement is not provable in \(\el + \gc + \kmax.\)
\end{proof}

\noindent
Note that dichotomy is trivially true for Dedekind reals (simply check in which half \(0\) is).
Thus, Proposition~\ref{P:CauchyDedekind} is actually a corollary of Proposition~\ref{P:LLPO}.

\subsection{The fundamental theorem of algebra}

Cauchy complex numbers are pairs \(\seq{\xi_0,\xi_1}\) where \(\xi_0, \xi_1 \in \R^C.\)
These are intended to represent the real and imaginary parts of the complex number.
Thus, we write \(\xi \in \C^C\) to abbreviate \(\fst{\xi}, \snd{\xi} \in \R^C.\)
Addition and multiplication on complex numbers are defined as usual; it is not difficult to check that \(\el_0\) proves that \(\C^C\) is a field.
However, \(\el + \gc\) does not prove that \(\C^C\) is algebraically complete.

\begin{proposition}\label{P:SquareRoot}
  \(\el + \gc + \kmax\) does not prove that every complex number has a square root.
\end{proposition}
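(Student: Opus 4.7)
The plan is to apply Corollary~\ref{C:KUniformization} in direct analogy with Proposition~\ref{P:CauchyDedekind}. Formally, the statement to disprove asserts
\[
\forall\xi(\xi \in \C^C \lthen \exists\zeta(\zeta \in \C^C \land \zeta^2 = \xi)),
\]
where $\zeta^2 = \xi$ abbreviates the two Cauchy-real equalities $\fst{\zeta}^2 - \snd{\zeta}^2 = \fst{\xi}$ and $2\fst{\zeta}\snd{\zeta} = \snd{\xi}$ (arithmetic on Cauchy reals being implementable by explicit function terms in $\el$).

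First I would verify the syntactic form. The hypothesis $\xi \in \C^C$ is a conjunction of two Cauchy conditions, each a $\forall s, t$ of a quantifier-free matrix, and is therefore in $\kneg$. The body of the conclusion is likewise a conjunction of universal statements over quantifier-free matrices (the Cauchy conditions on $\fst{\zeta}$ and $\snd{\zeta}$ together with the two Cauchy-real equalities), hence in $\kneg$ and a fortiori in $\kcon$. Since $\kcon$ is closed under $\exists\zeta$, the whole conclusion lies in $\kcon$ and the statement has the form required by Corollary~\ref{C:KUniformization}.

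If $\el + \gc + \kmax$ proved the statement, the corollary would yield the sequential form in $\rca$: given any sequence $(\xi_n)_{n=0}^\infty$ of Cauchy complex numbers, produce a sequence $(\zeta_n)_{n=0}^\infty$ with $\zeta_n^2 = \xi_n$ for every $n$. The main obstacle, and the real engine of the proof, is a reverse-mathematical lower bound showing this sequential form is not provable in $\rca$. I would obtain it by reducing sequential $\llpo$ to sequential complex square roots: given an $\llpo$ instance, build Cauchy reals $r_n$ whose signs encode the two disjuncts, view each as a Cauchy complex number $(r_n, 0) \in \C^C$, and from a square root $\zeta_n = (a_n, b_n)$ (which satisfies $a_n^2 - b_n^2 = r_n$ and $a_n b_n = 0$ as Cauchy reals) decode the sign of $r_n$ by searching for an $s$ witnessing that $|a_n|$ or $|b_n|$ is bounded away from zero. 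Since the sequential form of $\llpo$ is equivalent to $\wkl$ over $\rcao$ by Dorais--Hirst--Shafer~\cite{DoraisHirstShafer}, and $\wkl$ is not provable in $\rca$, the square-root statement is not provable in $\el + \gc + \kmax$.
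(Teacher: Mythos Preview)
Your syntactic verification is correct, but the overall approach cannot work. The paper explicitly remarks (immediately after its proof of this proposition) that \(\rcao\) \emph{does} prove the sequential form
\[
\forall\xi\bigl(\forall n(\xi_n \in \C^C) \lthen \exists\zeta\,\forall n(\zeta_n \in \C^C \land \xi_n = \zeta_n^2)\bigr),
\]
and that this is precisely why Corollary~\ref{C:KUniformization} is useless here and Proposition~\ref{P:KExtract} must be used instead. Intuitively, given a sequence of Cauchy complex numbers one can build square roots stage by stage, always choosing the approximate root nearest to the previous stage; Cauchy-ness of the input prevents winding around the origin, so the outputs are Cauchy. Hence no reduction from sequential \(\llpo\) to sequential complex square roots can exist over \(\rcao\).

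Your proposed reduction breaks at the decode step: from a square root \((a_n,b_n)\) of \((r_n,0)\) you propose to search for an \(s\) witnessing \(|a_n|\) or \(|b_n|\) bounded away from zero, but when \(r_n=0\) (both disjuncts true, which \(\llpo\) allows) this search never halts, and there is no \(\rcao\)-uniform way to default in that case while remaining correct for small nonzero \(r_n\). The paper's proof avoids all this: it applies Proposition~\ref{P:KExtract} directly to extract, in \(\el\), a single \(\alpha\) with \(\alpha|\xi\converges\) and \((\alpha|\xi)^2=\xi\) for every \(\xi\in\C^C\). Since the axioms of \(\el\) are classically valid, this would give a total continuous square-root function on the complex numbers, which does not exist. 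That topological obstruction, not a reverse-mathematical lower bound on the sequential form, is the engine of the argument.
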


\begin{proof}
  Suppose on the contrary that \(\el + \gc\) does prove that every complex number has a square root. 
  It follows that from Proposition~\ref{P:KExtract} that \(\el\) proves the existence of some \(\alpha\) such that \[(\forall \xi)(\xi \in \C^C \lthen \alpha|\xi\converges \land \alpha|\xi \in \C^C \land (\alpha|\xi)^2 = \xi).\]
  Since this statement is in \(\kcon,\) it follows that \(\el\) proves the existence of such an \(\alpha.\)
  This is impossible since the axioms of \(\el\) are classically valid and there is no total continuous function on the complex numbers that selects one of the two square roots of its argument.
\end{proof}

\noindent
The use of Proposition~\ref{P:KExtract} instead of Corollary~\ref{C:KUniformization} was necessary for this argument since \(\rca_0\) does prove the sequential form \[\forall\xi(\forall n(\xi_n \in \C^C) \lthen \exists\zeta\forall n(\zeta_n \in \C^C \land \xi_n = \zeta_n^2)).\]
In particular, the converse of Corollary~\ref{C:KUniformization} is false.

While the fundamental theorem of algebra is not provable in \(\el + \gc + \kmax,\) it is provable in \(\el + \wkl.\)

\begin{proposition}
  \(\el + \wkl\) proves that
  \[\forall\xi_1,\dots,\xi_n \in \C^C \exists\zeta \in \C^C(\zeta^n + \xi_1\zeta^{n-1} + \cdots + \xi_n = 0).\]
\end{proposition}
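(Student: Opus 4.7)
The plan is to exhibit a root as the limit of a Cauchy sequence of rational complex approximations extracted via \wkl. Given coefficients $\xi_1,\dots,\xi_n \in \C^C$, one first computes from them a rational bound $M$ such that $|p(z)| > 1$ whenever $|z| > M$, which is possible because crude rational bounds on $|\xi_i|$ can be read from $\xi_i(0)$ together with the Cauchy condition. All further work takes place inside the disk $\{z : |z| \leq M\}$, which at level $k$ we cover by a finite rational grid $D_k$ of mesh $2^{-k}$.

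Next, I would build a decidable bounded tree $T$ whose nodes are finite sequences $\langle w_0,\dots,w_k\rangle$ with each $w_i \in D_i$ satisfying: (i)~a decidable rational approximation of $|p(w_i)|$, computed by evaluating $p$ on the first $k+i$ Cauchy approximations of the coefficients with propagated error bounds, is strictly below $2^{-i}$; and (ii)~$|w_{i+1}-w_i| \leq c \cdot 2^{-i/n}$ for a constant $c$ computable from the coefficients. Condition~(ii) makes any infinite branch Cauchy in $\C^C$, since $\sum_k 2^{-k/n}$ converges for $n \geq 1$; condition~(i) together with uniform continuity of $p$ on the disk forces the limit $\zeta$ of such a branch to satisfy $p(\zeta) = 0$. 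Applying \wkl\ to $T$ then delivers the root.

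The heart of the argument, and its main obstacle, is showing constructively that $T$ has nodes at every level and that every such node extends. This reduces to a Kneser--style refinement lemma: given a rational $w$ with approximate $|p(w)| \leq \epsilon$, one can compute a rational $w'$ within distance $c \cdot \epsilon^{1/n}$ of $w$ at which the approximate $|p(w')|$ is at most $\epsilon/2$. The lemma is proved by partitioning a disk of radius $c \cdot \epsilon^{1/n}$ around $w$ into $n$ sectors, using the Taylor expansion of $p$ about $w$ to show that the minimum of $|p|$ drops by a factor of two in at least one sector (here the monic leading term dominates at radius $\epsilon^{1/n}$), and then rounding a witness to the grid $D_{i+1}$. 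The subtle point is making ``the minimum drops in at least one sector'' decidable through sufficiently sharp rational approximations and ensuring that rounding to $D_{i+1}$ does not destroy the estimate --- routine in principle, but the only place where the algebraic structure of polynomials really enters.
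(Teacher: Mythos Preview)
Your approach is correct, but it is more elaborate than the paper's. The paper exploits the compact-set machinery of Section~1.2 directly: since ``$\zeta \in \C^C$ and $|\zeta| \leq M$ and $p(\zeta)=0$'' is a bounded $\Pi^0_1$ condition on $\zeta$, the root set itself is of the form $\vset{\alpha}$ for a suitable~$\alpha$ definable in \el. An infinite branch through the underlying tree is then \emph{automatically} a Cauchy complex root; no auxiliary proximity condition like your~(ii) is needed. To see that the tree is infinite one only needs, for each $k$, \emph{some} rational $w$ with $|p(w)|$ small enough that the constant sequence $\seq{w,w,\dots,w}$ furnishes a level-$k$ node --- i.e., bare existence of approximate roots at every precision. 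Your tree, by contrast, must manufacture Cauchy-ness of the branch through condition~(ii), and it is precisely this extra demand that forces you into the Kneser refinement lemma and the sector argument. The analytic core (constructive existence of approximate roots) is ultimately the same in both routes, but the paper's packaging via $\vset{\cdot}$ makes the passage through \wkl\ shorter and avoids tracking the quantitative proximity bound $c\cdot 2^{-i/n}$ altogether.
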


\noindent
This is because  \(\el\) proves that for any coefficients \(\xi_1,\dots,\xi_n \in \C^C,\) there is a function \(\alpha\) such that \[\vset{\alpha} = \set{\zeta \in \C^C : \zeta^n + \xi_1\zeta^{n-1} + \cdots + \xi_n = 0}.\]
Then, by proving the existence of approximate roots, \(\el + \wkl\) proves that \(\vset{\alpha} \neq \varnothing.\)

\bibliographystyle{amsplain}
\bibliography{gc}

\end{document}